\def\ge{\geqslant}
\def\le{\leqslant}
\def\a{\alpha}
\def\g{\gamma}
\def\L{\Lambda}
\def\e{\epsilon}
\def\o{\omega}
\def\th{\theta}
\def\k{\kappa}
\def\i{^{-1}}
\def\<{\langle}
\def\>{\rangle}
\newcommand{\BE}{\ensuremath{\mathbb {E}}\xspace}
\newcommand{\BF}{\ensuremath{\mathbb {F}}\xspace}
\newcommand{{\BG}}{\ensuremath{\mathbb {G}}\xspace}
\newcommand{\BH}{\ensuremath{\mathbb {H}}\xspace}
\newcommand{\BI}{\ensuremath{\mathbb {I}}\xspace}
\newcommand{{\BK}}{\ensuremath{\mathbb {K}}\xspace}
\newcommand{\BL}{\ensuremath{\mathbb {L}}\xspace}
\newcommand{\BQ}{\ensuremath{\mathbb {Q}}\xspace}
\newcommand{\BR}{\ensuremath{\mathbb {R}}\xspace}
\newcommand{\BS}{\ensuremath{\mathbb {S}}\xspace}
\newcommand{\BT}{\ensuremath{\mathbb {T}}\xspace}
\newcommand{\BU}{\ensuremath{\mathbb {U}}\xspace}
\newcommand{\BV}{\ensuremath{\mathbb {V}}\xspace}
\newcommand{\BW}{\ensuremath{\mathbb {W}}\xspace}
\newcommand{\BX}{\ensuremath{\mathbb {X}}\xspace}
\newcommand{\BZ}{\ensuremath{\mathbb {Z}}\xspace}
\newcommand{\CA}{\ensuremath{\mathcal {A}}\xspace}
\newcommand{\CB}{\ensuremath{\mathcal {B}}\xspace}
\newcommand{\CC}{\ensuremath{\mathcal {C}}\xspace}
\newcommand{\CG}{\ensuremath{\mathcal {G}}\xspace}
\newcommand{\CH}{\ensuremath{\mathcal {H}}\xspace}
\newcommand{\CK}{\ensuremath{\mathcal {K}}\xspace}
\newcommand{\CO}{\ensuremath{\mathcal {O}}\xspace}
\newcommand{\CP}{\ensuremath{\mathcal {P}}\xspace}
\newcommand{\CS}{\ensuremath{\mathcal {S}}\xspace}
\newcommand{\CX}{\ensuremath{\mathcal {X}}\xspace}
\newcommand{\GL}{\mathrm{GL}}
\DeclareMathOperator{\tr}{tr}
\newcommand{\ov}{\overline}
\def\brk{{\breve k}}
\def\COk{{\CO_{\brk}}}
\def\tPhi{\widetilde \Phi}
\def\ind{{\rm ind}}
\def\bx{{\mathbf x}}
\def\der{{\rm der}}
\def\ov{\overline}
\newtheorem{theorem}{Theorem}
\newtheorem{proposition}[theorem]{Proposition}
\newtheorem{lemma}[theorem]{Lemma}
\newtheorem{corollary}[theorem]{Corollary}
\theoremstyle{definition}
\newtheorem{remark}[theorem]{Remark}
\numberwithin{equation}{section}
\numberwithin{theorem}{section}
\renewcommand{\to}{%
   \ifbool{@display}{\longrightarrow}{\rightarrow}%
   }
\let\shortmapsto\mapsto
\renewcommand{\mapsto}{%
   \ifbool{@display}{\longmapsto}{\shortmapsto}%
   }
\newlength{\olen}
\newlength{\ulen}
\newlength{\xlen}
\newcommand{\xra}[2][]{%
   \ifbool{@display}%
      {\settowidth{\olen}{$\overset{#2}{\longrightarrow}$}%
       \settowidth{\ulen}{$\underset{#1}{\longrightarrow}$}%
       \settowidth{\xlen}{$\xrightarrow[#1]{#2}$}%
       \ifdimgreater{\olen}{\xlen}%
          {\underset{#1}{\overset{#2}{\longrightarrow}}}%
          {\ifdimgreater{\ulen}{\xlen}%
             {\underset{#1}{\overset{#2}{\longrightarrow}}}
             {\xrightarrow[#1]{#2}}}}%
      {\xrightarrow[#1]{#2}}
   }
\newcommand{\xyra}[2][]{%
   \settowidth{\xlen}{$\xrightarrow[#1]{#2}$}%
   \ifbool{@display}%
      {\settowidth{\olen}{$\overset{#2}{\longrightarrow}$}%
       \settowidth{\ulen}{$\underset{#1}{\longrightarrow}$}%
       \ifdimgreater{\olen}{\xlen}%
          {\mathrel{\xymatrix@M=.12ex@C=3.2ex{\ar[r]^-{#2}_-{#1} &}}}%
          {\ifdimgreater{\ulen}{\xlen}%
             {\mathrel{\xymatrix@M=.12ex@C=3.2ex{\ar[r]^-{#2}_-{#1} &}}}
             {\mathrel{\xymatrix@M=.12ex@C=\the\xlen{\ar[r]^-{#2}_-{#1} &}}}}}%
      {\mathrel{\xymatrix@M=.12ex@C=\the\xlen{\ar[r]^-{#2}_-{#1} &}}}%
   }
\newcommand{\xla}[2][]{%
   \ifbool{@display}%
      {\settowidth{\olen}{$\overset{#2}{\longleftarrow}$}%
       \settowidth{\ulen}{$\underset{#1}{\longleftarrow}$}%
       \settowidth{\xlen}{$\xleftarrow[#1]{#2}$}%
       \ifdimgreater{\olen}{\xlen}%
          {\underset{#1}{\overset{#2}{\longleftarrow}}}%
          {\ifdimgreater{\ulen}{\xlen}%
             {\underset{#1}{\overset{#2}{\longleftarrow}}}
             {\xleftarrow[#1]{#2}}}}%
      {\xleftarrow[#1]{#2}}
   }
\newcommand{\isoarrow}{%
   \ifbool{@display}{\overset{\sim}{\longrightarrow}}{\xrightarrow\sim}%
   }
\newcommand{\sm}{{\,\smallsetminus\,}}
\newcommand{\colim@}[2]{%
  \vtop{\m@th\ialign{##\cr
    \hfil$#1\operator@font lim$\hfil\cr
    \noalign{\nointerlineskip\kern1.5\ex@}#2\cr
    \noalign{\nointerlineskip\kern-\ex@}\cr}}%
}
\newcommand{\colim}{%
  \mathop{\mathpalette\colim@{\rightarrowfill@\textstyle}}\nmlimits@
}
\newcommand{\prolim@}[2]{%
  \vtop{\m@th\ialign{##\cr
    \hfil$#1\operator@font lim$\hfil\cr
    \noalign{\nointerlineskip\kern1.5\ex@}#2\cr
    \noalign{\nointerlineskip\kern-\ex@}\cr}}%
}
\newcommand{\prolim}{%
  \mathop{\mathpalette\colim@{\leftarrowfill@\textstyle}}\nmlimits@
}
\begin{document}
\title[]{An explicit decomposition of higher Deligne-Lsuztig representations}

\author[Ben Liu]{Ben Liu}
\address{Academy of Mathematics and Systems Science, Chinese Academy of Sciences, Beijing 100190, China}
\email{liubenmath@gmail.com}

\author[Sian Nie]{Sian Nie}
\address{Academy of Mathematics and Systems Science, Chinese Academy of Sciences, Beijing 100190, China}

\address{School of Mathematical Sciences, University of Chinese Academy of Sciences, Chinese Academy of Sciences, Beijing 100049, China}
\email{niesian@amss.ac.cn}

\begin{abstract}
In a previous paper \cite{Nie_24}, the second named author obtains a decomposition of an elliptic higher Deligne-Lusztig representation into irreducible summands, which are built in the same way as Yu types using a geometric analog $\k'$ of the Weil-Heisenberg representation $\k$. In this note, we show that $\k'$ and $\k$ differs by a character $\chi$.  Moreover, under a mild condition on the cardinality $q$ of the residue field (for instance $q > 3$), we show that $\chi$ equals the sign character constructed by Fintzen-Kaletha-Spice \cite{FintzenKS}, which gives an explicit irreducible decomposition result on elliptic higher Deligne-Lusztig representations. As an application, we deduce (under the mild condition on $q$) that each unramified Yu type appears in the cohomology of higher Deligne-Lusztig varieties, and each unramified Kaletha's regular supercuspidal representation is the compact induction of a specified higher Deligne-Lusztig representation up to a sign.
\end{abstract}

\maketitle

\section{Introduction}
In \cite{Lusztig_79}, Lusztig introduced higher Deligne-Lusztig representations for parahoric subgroups, which are natural extensions of the celebrated Deligne-Lusztig representations for finite groups of Lie type \cite{DeligneL_76}. Motivated by the work of G\'{e}rardin \cite{Ger}, Lusztig conjectured that compact inductions of higher Deligne-Lusztig representations will realize supercuspidal representations of $p$-adic groups. Recently, there have been intensive studies on Lusztig's conjecture and further applications to (modular) local Langlands program. Let us mention the work  \cite{BoyarchenkoW_16}, \cite{Chan_siDL}, \cite{CI_loopGLn}, \cite{ChenS_17}, \cite{ChenS_23}, \cite{ChanOi_25a}, \cite{ChanOi_25b}, \cite{Nie_24}, \cite{IvanovNie_24}, \cite{IvanovNie_25} and references therein.

\subsection{Main result} The most interesting higher Deligne-Lusztig representations are those attached to attached to elliptic tori, which are referred to as elliptic higher Deligne-Lusztig representations. All the other higher Deligne-Lusztig representations are parabolic inductions of elliptic ones. The main purpose of note is to give an explicit algebraic characterization of elliptic higher Deligne-Lusztig representation.

Let $p \neq \ell$ be two different prime numbers. Let $k$ be a non-archimedean local field with a finite residue field of cardinality $q$ and of characteristic $p$. Let $G$ be a connected $k$-rational reductive group which splits over a maximal unramifed extension $\brk$ of $k$. Let $Z_G$ denote the center of $G$. Fix a point $\bx$ in the Bruhat-Tits building $\CB(G, k)$ of $G$ over $k$. Denote by $\CG = \CG_\bx$ the attached (connected) parahoric subgroup over the integer ring $\CO_k$ of $k$. We recall the following condition on $p$:

\[\tag{*} \text{ $p \neq 2$ is not a bad prime for $G$ and $p \nmid |\pi_1(G_\der)| \cdot |\pi_1(\widehat G_\der)|$ }.\] Here $G_\der$ and $\widehat G_\der$ are the derived subgroups of $G$ and its dual group $\widehat G$ respectively.

Let $T \subseteq G$ be a $k$-rational and $\brk$-split maximal elliptic torus, whose apartment over $k$ contains $\bx$. $\phi: T(k) \to \ov\BQ_\ell^\times$ of depth $\le r \in \BZ_{\ge 0}$. Choose a $\brk$-rational maximal unipotent subgroup $U \subseteq G$ normalized by $T$.  Following \cite{Lusztig_79} and \cite{CI_MPDL}, one can associate an $\ov\BF_q$-variety \[X = X_{T, U, x, r},\] called a higher Deligne-Lusztig variety. Applying Deligne-Lusztig cohomological induction to $X$, one obtains a virtual smooth $Z_G(k)\CG(\CO_k)$-module \[R_{T, r}^G(\phi) = R_{T, U, \bx, r}^G(\phi),\] which is referred to as a higher Deligne-Lusztig representation. If $r = 0$, $R_{T, r}^G(\phi)$ is by definition a classical Deligne-Lusztig representation.

Suppose that $\phi$ has a Howe factorization $(\L, \phi_{-1})$ in the sense of \cite{Kaletha_19}, where $\L =(G^i, \phi_i, r_i)_{0 \le i \le d}$ is a generic datum and $\phi_{-1}: T(k) \to \ov\BQ_\ell^\times$ a depth $0$ character. Under the condition (*), the second named author \cite{Nie_24} proved the following result  \[R_{T, r}^G(\phi) = \ind_{\CK(\CO_k)}^{\CG(\CO_k)} \k_\L \otimes R_{T, 0}^{G^0}(\phi_{-1}),\] where $\CK$ is the identity component of a Yu-type $\CO_k$-subgroup associated to $(\L, \bx)$; $\k_\L$ is an irreducible $\CK(\CO_k)$-module obtained through a cohomological method; and $R_{T, 0}^{G^0}(\phi_{-1})$ is the classical Deligne-Lusztig representation.

Thus, to give a complete description of $R_{T, r}^G(\phi)$, it remains to determine the representation $\k_\L$ arising from geometry. To this end, we recall two classical representations of Yu's subgroup $\widetilde K \supseteq Z_G(k)\CK(\CO_k)$ associated to $(\L, \bx)$. The first one is the Weil-Heisenberg representation $\widetilde\k(\L)$ introduced by Yu \cite{Yu_01}, which play an essential role in the construction of supercuspidal representations for $p$-adic groups. The other is the (inflated) quadratic character $\widetilde\e_\L$ constructed by Fintzen, Kaletha and Spice \cite{FintzenKS}, which paly a delicate role in constructing supercuspidal $L$-packets (see also the work by Chan and Oi \cite{ChanOi_25a} when $\phi$ is toral). Both $\widetilde\k(\L)$ and $\widetilde\e_\L$ are constructed in a purely algebraic way and have very explicit descriptions. Put $\k(\L) = \widetilde\k(\L) |_{Z_G(k)\CK(\CO_k)}$ and $\e_\L = \widetilde\e_\L |_{Z_G(k)\CK(\CO_k)}$ for simplicity.

Now we state the main result of this note.
\begin{theorem} \label{main}
    Assume $p \neq 2$. Then there exists a character $\psi$ of $Z_G(k)\CK(\CO_k)$ such that \[(-1)^{d_\L} \k_\L = \k(\L) \otimes \e_\L \otimes \psi,\] where $d_\L \in \{\pm 1\}$ is a sign associated to $\L$ (see Theorem \ref{geo-rep}).

    If, moreover, $q \ge c_\L$ then $\psi = 1$. Here $2 \le c_\L \le 4$ is an integer defined in Proposition \ref{der}.
\end{theorem}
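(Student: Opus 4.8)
The plan is to compare the two irreducible representations $(-1)^{d_\L}\k_\L$ and $\k(\L)\otimes\e_\L$ of $Z_G(k)\CK(\CO_k)$ along the filtration by Moy--Prasad subgroups: on the pro-$p$ part the Heisenberg--Weil recipe is rigid, so the two already agree there and hence differ by a character $\psi$ of the finite reductive quotient; the remaining task is to show that this residual $\psi$ is trivial as soon as $q$ is not too small.

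\textbf{Existence of $\psi$.} By Theorem \ref{geo-rep}, $(-1)^{d_\L}\k_\L$ is an honest irreducible representation of $Z_G(k)\CK(\CO_k)$ built from the same Heisenberg--Weil datum attached to $(\L,\bx)$ as Yu's $\widetilde\k(\L)$; in particular its restriction to the pro-$p$ radical $\CK(\CO_k)_{0+}$ coincides with that of $\k(\L)$, and the latter is an irreducible representation whose restriction to the central Heisenberg subgroup has the central character forced by $\phi$. Since $p\ne 2$ and $\e_\L$ is quadratic, $\e_\L$ is trivial on the pro-$p$ group $\CK(\CO_k)_{0+}$; hence $(-1)^{d_\L}\k_\L$ and $\k(\L)\otimes\e_\L$ have the same irreducible restriction to $\CK(\CO_k)_{0+}$. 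By Clifford theory the space $\Hom_{\CK(\CO_k)_{0+}}\bigl((-1)^{d_\L}\k_\L,\ \k(\L)\otimes\e_\L\bigr)$ is one-dimensional and $Z_G(k)\CK(\CO_k)$ acts on it through a character $\psi$ that is trivial on $\CK(\CO_k)_{0+}$; the tautological intertwiner is then an isomorphism $(-1)^{d_\L}\k_\L\cong\k(\L)\otimes\e_\L\otimes\psi$. This gives the first assertion and shows $\psi$ factors through the finite group $\bar\sfH:=Z_G(k)\CK(\CO_k)/\CK(\CO_k)_{0+}$.

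\textbf{Vanishing of $\psi$ when $q\ge c_\L$.} The group $\bar\sfH$ is an extension of $\sfH(\BF_q)$, the group of $\BF_q$-points of the reductive quotient $\sfH$ of the special fibre of $\CK$, by the abelian group $Z_G(k)/\bigl(Z_G(k)\cap\CK(\CO_k)_{0+}\bigr)$. Proposition \ref{der} is exactly the statement that $q\ge c_\L$ forces the commutator subgroup of $\bar\sfH$ to contain the image of the simply connected cover $\sfH_{\sc}(\BF_q)\to\sfH(\BF_q)$ (equivalently, the relevant finite classical groups are perfect), so $\psi$ factors through the abelianization of $\bar\sfH$, which is generated by the images of $Z_G(k)$ and of a maximal $\BF_q$-torus $\bT(\BF_q)\subseteq\sfH(\BF_q)$. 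It then remains to evaluate $\psi$ on these two abelian pieces. On $Z_G(k)$ the restrictions of $\k(\L)$ (from Yu's construction), of $\e_\L$ (from \cite{FintzenKS}), and of $\k_\L$ (from the central character of higher Deligne--Lusztig induction, via Theorem \ref{geo-rep}) are all explicit, and one checks directly that they are compatible, forcing $\psi|_{Z_G(k)}=1$. On $\bT(\BF_q)$ I would compare character values: the value of $(-1)^{d_\L}\k_\L$ on a regular element of $\bT(\BF_q)$ is given by a Deligne--Lusztig fixed-point count (again Theorem \ref{geo-rep}), while the value of $\k(\L)$ there is a product of quadratic Gauss sums coming from the Weil representations of the symplectic layers, and $\e_\L$ on $\bT(\BF_q)$ is the Fintzen--Kaletha--Spice sign; matching these identities yields $\psi|_{\bT(\BF_q)}=1$, whence $\psi=1$.

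\textbf{Main obstacle.} The delicate step is the comparison on $\bT(\BF_q)$: one must align the geometric trace formula for $\k_\L$ with the Gauss-sum formula for Yu's Weil representation and with the explicit sign $\e_\L$, while keeping scrupulous track of the orientation sign $(-1)^{d_\L}$ and of the normalizations of the additive characters of the residue field that enter each side. This sign-bookkeeping is precisely where the small-$q$ exceptions live, which is why one needs $q\ge c_\L$ (with $2\le c_\L\le 4$) to ensure the finite symplectic and reductive groups in play are perfect and leave no room for a nontrivial $\psi$.
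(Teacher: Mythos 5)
Your proposal follows essentially the same route as the paper: first, both representations restrict to the same Heisenberg representation on the pro-$p$ part $\BH_\L^F$, so by Clifford theory (the paper's Proposition \ref{diff}) they differ by a character $\psi$ of the reductive quotient $(\BL_\L)_0^F$; then, for $q \ge c_\L$, Proposition \ref{der} reduces triviality of $\psi$ to triviality on torus elements, which is established by matching the geometric trace formula for $\k_\L$ (Proposition \ref{geo-tr}, via the concentration theorem) against the Adler--Spice/DeBacker--Spice evaluation of $\tr(\g;\k(\L))$ together with the Fintzen--Kaletha--Spice sign (Proposition \ref{WH-tr}). Your ``Gauss sums on the symplectic layers'' step is precisely what those cited character computations carry out, so the comparison on $\BS_0^F$ you flag as the main obstacle is exactly the content of Propositions \ref{WH-tr} and \ref{geo-tr} in the paper.
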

\begin{remark}
    It would be interesting to extend Theorem \ref{main} to the modular case. This would give an explicit description of modular higher Deligne-Lusztig representations, see \cite{IvanovNie_25}.
\end{remark}

Combining \cite[Theorem 1.6]{Nie_24} and Theorem \ref{main} we have an explicit irreducible decomposition result.
\begin{corollary} \label{algebraisation}
    Assume $p$ satisfies (*) and $q \ge c_\L$. Then \[R_{T, r}^G(\phi) = (-1)^{d_\L} \sum_{\rho} m_\rho \ind_{\CK(\CO_k)}^{\CG(\CO_k)} \k(\L) \otimes \e_\L \otimes \rho,\] where $\rho$ runs through irreducible factors up to isomorphism of $R_{T, 0}^{G^0}(\phi_{-1})$ with multiplicity $m_\rho \in \BZ$.  Moreover, the $\CG(\CO_k)$-modules \[\ind_{\CK(\CO_k)}^{\CG(\CO_k)} \k(\L) \otimes \e_\L \otimes \rho\] on the right hand side of (*) are irreducible and non-isomorphic with each other.
\end{corollary}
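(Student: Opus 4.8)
The plan is to simply substitute the formula of Theorem \ref{main} into the decomposition of \cite[Theorem 1.6]{Nie_24} and then verify the irreducibility and pairwise non-isomorphism claims. First, recall that \cite[Theorem 1.6]{Nie_24} gives $R_{T, r}^G(\phi) = \ind_{\CK(\CO_k)}^{\CG(\CO_k)} \k_\L \otimes R_{T, 0}^{G^0}(\phi_{-1})$. Decompose the classical Deligne-Lusztig representation $R_{T, 0}^{G^0}(\phi_{-1}) = \sum_\rho m_\rho \, \rho$ into a $\BZ$-linear combination of (isomorphism classes of) irreducible $G^0$-modules. Since $q \ge c_\L$, Theorem \ref{main} applies with $\psi = 1$, so $\k_\L = (-1)^{d_\L} \k(\L) \otimes \e_\L$. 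Substituting and using that $\ind$ and $\otimes\,(-)$ are additive on virtual modules gives the displayed formula \[R_{T, r}^G(\phi) = (-1)^{d_\L} \sum_\rho m_\rho \, \ind_{\CK(\CO_k)}^{\CG(\CO_k)} \k(\L) \otimes \e_\L \otimes \rho.\]

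It remains to prove that each summand $V_\rho := \ind_{\CK(\CO_k)}^{\CG(\CO_k)} \k(\L) \otimes \e_\L \otimes \rho$ is irreducible, and that $V_\rho \not\cong V_{\rho'}$ for non-isomorphic irreducible constituents $\rho, \rho'$ of $R_{T, 0}^{G^0}(\phi_{-1})$. For irreducibility, the cleanest route is to transport the corresponding known statement for $\k_\L$: by \cite[Theorem 1.6]{Nie_24} (or the surrounding results in \cite{Nie_24}), the module $\ind_{\CK(\CO_k)}^{\CG(\CO_k)} \k_\L \otimes \rho$ is irreducible and these are pairwise non-isomorphic as $\rho$ varies over the distinct irreducible constituents. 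Since twisting $\k_\L$ by the one-dimensional character $\e_\L^{-1}$ (and by the scalar $(-1)^{d_\L}$, which does not change the isomorphism class) is an invertible operation on the category of $\CK(\CO_k)$-modules — indeed an autoequivalence — it preserves irreducibility of the induced module and preserves (non-)isomorphism classes. More precisely, for a character $\e$ of $\CK(\CO_k)$ that extends to $\CG(\CO_k)$ one has the projection-formula identity $\ind_{\CK(\CO_k)}^{\CG(\CO_k)}(M \otimes \e|_{\CK(\CO_k)}) \cong (\ind_{\CK(\CO_k)}^{\CG(\CO_k)} M) \otimes \e$; but $\e_\L$ need not extend, so instead one argues directly: $V_\rho$ is irreducible iff $\End_{\CG(\CO_k)}(V_\rho)$ is one-dimensional, and by Frobenius reciprocity (Mackey theory) this endomorphism algebra is computed from the intertwining of $\k(\L)\otimes\e_\L\otimes\rho$ with its $g$-conjugates for $g$ in a set of double-coset representatives; the $g$-conjugate of $\e_\L$ on the relevant intersection subgroups agrees with $\e_\L$ (as $\e_\L$ is the restriction of a character of the larger group $\widetilde K \supseteq Z_G(k)\CK(\CO_k)$, hence stable under the relevant conjugations, and in any case quadratic so insensitive to the twist needed), whence the Mackey computation for $V_\rho$ reduces term-by-term to the one for $\ind_{\CK(\CO_k)}^{\CG(\CO_k)} \k_\L \otimes \rho$, which is already known to give a one-dimensional endomorphism algebra. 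The same Mackey comparison shows $\Hom_{\CG(\CO_k)}(V_\rho, V_{\rho'})$ has the same dimension as $\Hom_{\CG(\CO_k)}(\ind \k_\L \otimes \rho, \ind \k_\L \otimes \rho')$, which vanishes for $\rho \not\cong \rho'$ by \cite{Nie_24}.

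The main obstacle is the bookkeeping in this last Mackey-theoretic comparison: one must check carefully that the character $\e_\L$ (really $\widetilde\e_\L$) is invariant under conjugation by the double-coset representatives $g$ that occur, restricted to the subgroups $\CK(\CO_k) \cap {}^g\!\CK(\CO_k)$ that intervene in the intertwining computation — equivalently, that the twist by $\e_\L$ is compatible with the structure used in \cite{Nie_24} to prove irreducibility of $\ind \k_\L \otimes \rho$. This should follow from the fact that $\widetilde\e_\L$ is defined on all of $\widetilde K$ (so is automatically stable under inner automorphisms by elements of $\widetilde K$, and the relevant $g$ can be arranged to lie in, or interact trivially with, $\widetilde K$), together with its explicit quadratic nature recorded in \cite{FintzenKS}; alternatively, one can cite the analogous invariance already exploited in \cite{ChanOi_25a} in the toral case. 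Once this compatibility is in place, the rest is a formal transport of structure and requires no new geometric input.
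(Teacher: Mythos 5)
Your first paragraph coincides with the paper's argument: substitute Theorem \ref{main} (with $\psi = 1$, valid since $q \ge c_\L$) into the decomposition $R_{T,r}^G(\phi) = \ind_{\CK(\CO_k)}^{\CG(\CO_k)} \k_\L \otimes R_{T,0}^{G^0}(\phi_{-1})$ of \cite[Theorem 1.6]{Nie_24} (Theorem \ref{decomp}) and expand $R_{T,0}^{G^0}(\phi_{-1}) = \sum_\rho m_\rho\,\rho$. That part is fine and is all the paper does.

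The second half of your proposal is both unnecessary and, as written, gapped. The key point you miss is that Theorem \ref{main} (in the form of Theorem \ref{main-kappa}) is an isomorphism of \emph{genuine} $\CK(\CO_k)$-modules $(-1)^{d_\L}\k_\L \cong \k(\L)\otimes\e_\L$ once $q \ge c_\L$; hence $\k(\L)\otimes\e_\L\otimes\rho \cong (-1)^{d_\L}\k_\L\otimes\rho$, and the induced module $\ind_{\CK(\CO_k)}^{\CG(\CO_k)} \k(\L)\otimes\e_\L\otimes\rho$ is \emph{isomorphic} to $\ind_{\CK(\CO_k)}^{\CG(\CO_k)} \k_\L\otimes\rho$ --- not merely related to it by a residual twist that has to be pushed through an intertwining computation. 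The irreducibility and pairwise non-isomorphism of the latter modules are already part of the statement of \cite[Theorem 1.6]{Nie_24}, so they transfer verbatim; no Mackey theory, no extension of $\e_\L$, and no conjugation-invariance check is needed. Moreover, the Mackey detour you sketch does not stand on its own: the double-coset representatives $g$ range over $\CK(\CO_k)\backslash\CG(\CO_k)/\CK(\CO_k)$, and there is no justification for the claim that they ``can be arranged to lie in, or interact trivially with, $\widetilde K$''; nor does the fact that $\widetilde\e_\L$ is defined on $\widetilde K$ give invariance of $\e_\L$ under conjugation by such $g$ (it only gives invariance under $\widetilde K$-conjugation). So the irreducibility portion of your argument has a genuine gap --- but it is closed, and the entire detour eliminated, by the single observation above, which is exactly how the corollary follows in the paper.
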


\begin{remark} \label{compare}
    When a preliminary version of this note is finished, we noticed that Chan and Oi \cite{ChanOi_25b} also obtains Corollary \ref{algebraisation} under certain largeness condition on $q$. Their approach is quite different from ours, which is based on studies of the regular case and  Green functions of higher Deligne-Lusztig representations. Note that our assumption on $q$ may be weaker than theirs in some cases. For instance, if $\phi$ is toral, then $c_\L = 2$ by definition and hence there is no restriction on $q$ in Corollary \ref{algebraisation}.
\end{remark}

\subsection{Application} Now we discuss some applications of the main result. In \cite{Yu_01}, Yu introduced the notion of (tame) cuspidal $G$-data, and associated with each cuspidal $G$-datum $\Sigma$ an irreducible supercuspidal representation $\pi_\Sigma^{\rm Yu}$ of $G(k)$. Conversely, thanks to work of Kim \cite{Kim} and Fintzen \cite{Fintzen_21_ann}, all the irreducible supercuspidal representations are exhausted by Yu's representations $\pi_\Sigma^{\rm Yu}$, provided $p$ does not divide the order of the absolute Weyl group of $G$.

Recall briefly that a cupidal $G$-datum $\Sigma$ consists of a generic datum $\L =(G^i, \phi_i, r_i)_{0 \le i \le d}$, a point $\bx \in \CB(G^0, k)$ and a depth $0$ irreducible representation $\widetilde \rho$ of $G^0(k)_{[\bx]}$ satisfying certain conditions. Here $G^0(k)_{[\bx]}$ denotes the stabilizer in $G^0(k)$ of the image $[\bx]$ of $\bx$ in $\CB(G^0_\der, k)$. Then $\pi_\Sigma^{\rm Yu}$ and its twisted company $\pi_\Sigma^{\rm FKS}$ are given by \[\pi_\Sigma^{\rm Yu} = \text{c-}\ind_{\widetilde K}^{G(k)} \widetilde \k(\L) \otimes \widetilde \rho, \quad \pi_\Sigma^{\rm FKS} = \text{c-}\ind_{\widetilde K}^{G(k)} \widetilde \k(\L) \otimes \widetilde\e_\L \otimes \widetilde \rho.\]

Now we assume the cuspidal datum $\Sigma = (\L, \bx, \widetilde\rho)$ above is unramified, that is, the Levi subgroup $G^0$ splits over $\brk$. Let $\CS_{\L, x}$ be the set of $k$-rational and $\brk$-split maximal tori of $G^0$ whose apartments over $k$ contain $\bx$. The following result gives an explicit relation between irreducible supercupidal representations and cohomology of convex higher Deligne-Lusztig varieties.
\begin{theorem} \label{main-sup}
    Assume $p$ satisfies (*) and $q \ge c_\L$. Let $\Sigma = (\L, \bx, \widetilde\rho)$ be an unramified cuspidal $G$-datum. Let $T \in \CS_{\L, \bx}$ be an elliptic torus and $\phi_{-1}$ a depth $0$ character of $T(k)$ such that $\hom_{Z_G(k)\CG^0(\CO_k)}(\widetilde\rho, R_{T, 0}^{G^0}(\phi_{-1})) \neq 0$. Then there exists $i \in \BZ_{\ge 0}$ such that \[\pi_\Sigma^{\rm FKS} \text{ is a direct summand of } \text{c-}\ind_{Z_G(k)\CG(\CO_k)}^{G(k)} H_c^i(X, \ov\BQ_\ell)[\phi],\] where $\phi$ is the unique character with Howe factorization $(\L, \phi_{-1})$.

    Moreover, such $T$ and $\phi_{-1}$ satisfying above conditions always exist.
\end{theorem}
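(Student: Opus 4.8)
The plan is to feed the decomposition of Corollary~\ref{algebraisation} into the definition $R_{T,r}^G(\phi) = \sum_{i \ge 0}(-1)^i H_c^i(X, \ov\BQ_\ell)[\phi]$ of the higher Deligne--Lusztig representation, to isolate a single cohomological degree, and then to recognize the compact induction of the surviving piece as $\pi_\Sigma^{\rm FKS}$; the existence of a suitable pair $(T, \phi_{-1})$ will be deduced from classical depth-zero Deligne--Lusztig theory for the (unramified) twisted Levi $G^0$. Concretely, first write $R_{T,0}^{G^0}(\phi_{-1}) = \sum_\rho m_\rho [\rho]$ in the Grothendieck group of $Z_G(k)\CG^0(\CO_k)$-modules; the hypothesis $\hom_{Z_G(k)\CG^0(\CO_k)}(\widetilde\rho, R_{T,0}^{G^0}(\phi_{-1})) \neq 0$ then furnishes an irreducible $\rho$ with $m_\rho \neq 0$ and $\hom_{Z_G(k)\CG^0(\CO_k)}(\widetilde\rho, \rho) \neq 0$. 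By Corollary~\ref{algebraisation} the module $\Pi_\rho := \ind_{\CK(\CO_k)}^{\CG(\CO_k)} \k(\L) \otimes \e_\L \otimes \rho$ is irreducible and occurs in $R_{T,r}^G(\phi)$ with virtual multiplicity $(-1)^{d_\L} m_\rho \neq 0$. Since each $H_c^i(X, \ov\BQ_\ell)[\phi]$ is a finite-dimensional, hence semisimple, module over the compact-mod-centre group $Z_G(k)\CG(\CO_k)$, the multiplicities of the irreducible $\Pi_\rho$ in the individual $H_c^i$ are non-negative integers with nonzero alternating sum; hence $\Pi_\rho$ is a direct summand of $H_c^i(X, \ov\BQ_\ell)[\phi]$ for at least one $i \in \BZ_{\ge 0}$, and applying the exact functor $\text{c-}\ind_{Z_G(k)\CG(\CO_k)}^{G(k)}$ transports this into a direct-summand statement inside $\text{c-}\ind_{Z_G(k)\CG(\CO_k)}^{G(k)} H_c^i(X, \ov\BQ_\ell)[\phi]$.

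The next step is to identify $\text{c-}\ind_{Z_G(k)\CG(\CO_k)}^{G(k)} \Pi_\rho$. Using transitivity of compact induction along $Z_G(k)\CK(\CO_k) \subseteq Z_G(k)\CG(\CO_k) \subseteq G(k)$ and then through Yu's group $\widetilde K \supseteq Z_G(k)\CK(\CO_k)$, together with the projection formula and the relations $\k(\L) = \widetilde\k(\L)|_{Z_G(k)\CK(\CO_k)}$, $\e_\L = \widetilde\e_\L|_{Z_G(k)\CK(\CO_k)}$, one rewrites it as $\text{c-}\ind_{\widetilde K}^{G(k)} \widetilde\k(\L) \otimes \widetilde\e_\L \otimes \widetilde\Xi_\rho$, where $\widetilde\Xi_\rho$ is the inflation to $\widetilde K$ of $\ind_{Z_G(k)\CG^0(\CO_k)}^{G^0(k)_{[\bx]}} \rho$ --- here one uses that $\rho$, being of depth zero, is trivial on Yu's normal pro-$p$ subgroup of $\widetilde K$, so that inflation commutes with the induction. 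By Frobenius reciprocity $\hom_{G^0(k)_{[\bx]}}(\widetilde\rho, \ind_{Z_G(k)\CG^0(\CO_k)}^{G^0(k)_{[\bx]}} \rho) = \hom_{Z_G(k)\CG^0(\CO_k)}(\widetilde\rho, \rho) \neq 0$, and since $\widetilde\rho$ is irreducible and $\widetilde\Xi_\rho$ is a semisimple (finite-dimensional, compact-mod-centre) module, $\widetilde\rho$ is a direct summand of $\widetilde\Xi_\rho$. Tensoring with the fixed representation $\widetilde\k(\L) \otimes \widetilde\e_\L$ and applying $\text{c-}\ind_{\widetilde K}^{G(k)}$ --- both of which preserve direct summands --- exhibits $\pi_\Sigma^{\rm FKS} = \text{c-}\ind_{\widetilde K}^{G(k)} \widetilde\k(\L) \otimes \widetilde\e_\L \otimes \widetilde\rho$ as a direct summand of $\text{c-}\ind_{Z_G(k)\CG(\CO_k)}^{G(k)} \Pi_\rho$, hence of $\text{c-}\ind_{Z_G(k)\CG(\CO_k)}^{G(k)} H_c^i(X, \ov\BQ_\ell)[\phi]$, which settles the first assertion.

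For the existence statement, I would argue as follows. Since $\Sigma$ is a cuspidal $G$-datum, the conditions on $\widetilde\rho$ force its restriction to $\CG^0(\CO_k)$ to contain an irreducible cuspidal representation $\ov\rho$ of the reductive quotient $\mathbb{G}^0_\bx$ over $\BF_q$ of the special fibre of the parahoric $\CG^0$. By the exhaustion property of Deligne--Lusztig virtual characters --- every irreducible representation of a finite reductive group occurs in some $R_S(\ov\theta)$, see \cite{DeligneL_76} --- together with their compatibility with Harish--Chandra induction, $\ov\rho$ occurs in $R_S^{\mathbb{G}^0_\bx}(\ov\theta)$ for some maximal $\BF_q$-torus $S$ and character $\ov\theta$, and cuspidality of $\ov\rho$ forces $S$ to be $\BF_q$-elliptic (otherwise $R_S^{\mathbb{G}^0_\bx}(\ov\theta)$ is a proper Harish--Chandra induction). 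Because $\Sigma$ is unramified, $G^0$ splits over $\brk$, and via the correspondence between $k$-rational $\brk$-split maximal tori of $G^0$ whose apartment contains $\bx$ and maximal $\BF_q$-tori of $\mathbb{G}^0_\bx$, the torus $S$ lifts to some $T \in \CS_{\L, \bx}$ that is elliptic in $G$, while $\ov\theta$ lifts to a depth-zero character $\phi_{-1}$ of $T(k)$ for which $R_{T,0}^{G^0}(\phi_{-1})$ restricts on $\CG^0(\CO_k)$ to the inflation of $R_S^{\mathbb{G}^0_\bx}(\ov\theta)$. Then $\hom_{Z_G(k)\CG^0(\CO_k)}(\widetilde\rho, R_{T,0}^{G^0}(\phi_{-1})) \neq 0$, so $T$ and $\phi_{-1}$ meet the requirements, and $\phi$ is taken to be the unique character with Howe factorization $(\L, \phi_{-1})$.

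The cohomological-degree step is routine linear algebra once Corollary~\ref{algebraisation} is in hand. I expect the main obstacle to lie in the second step: precisely matching the geometrically defined data $(\k(\L), \e_\L, \rho)$ on $\CK(\CO_k)$ with Yu's algebraically defined data $(\widetilde\k(\L), \widetilde\e_\L, \widetilde\rho)$ on $\widetilde K$ --- in particular controlling the $Z_G(k)$-central characters and justifying the inflation/projection-formula identities used above --- together with the verification in the last step that the lift $T$ of $S$ can be chosen elliptic in $G$ (and not merely in $G^0$) and that $R_{T,0}^{G^0}(\phi_{-1})$ genuinely reduces to $R_S^{\mathbb{G}^0_\bx}(\ov\theta)$ on $\CG^0(\CO_k)$.
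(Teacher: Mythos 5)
Your proposal is correct and follows essentially the same route as the paper: it rests on the same ingredients, namely the decomposition of Corollary \ref{algebraisation} (equivalently Theorems \ref{decomp} and \ref{main-kappa}), Frobenius reciprocity for the depth-zero part, induction in stages together with the projection formula through Yu's group $\widetilde K$ via $\widetilde \BL_\L^F / Z_G^F \BL_\L^F \cong \widetilde \BK_\L^F / Z_G^F \BK_\L^F$, and the Deligne--Lusztig exhaustion and ellipticity results for the existence of $(T,\phi_{-1})$. The only difference is organizational: the paper starts from $\pi_\Sigma^{\rm FKS}$ and embeds it into $\text{c-}\ind_{Z_G(k)\CG(\CO_k)}^{G(k)}$ of the ``absolute value'' of $R_{T,r}^G(\phi)$, whereas you run the same identifications in the reverse direction, making explicit the choice of a single irreducible summand $\Pi_\rho$ and of a single cohomological degree $i$ (a point the paper leaves implicit).
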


\begin{remark}
    Theorem \ref{main-sup} implies that each supercuspidal representation associated to an unramified cuspidal datum can be realized by cohomology of higher Deligne-Lusztig varieties. The theorem is first proved in \cite{Nie_24} when $q$ is sufficiently large.
\end{remark}

In \cite{Kaletha_19}, Kaletha studied an important large family of supercupidal representations, called the regular supercupidal representations. These representations $\pi^{\rm Yu}_{(T, \phi)}$ are parameterized by tame elliptic regular pairs $(T, \phi)$, where $T \subseteq G$ is a tame elliptic maximal torus and $\phi$ is of character of $T(k)$ satisfying certain regularity conditions. We refer to \cite[\S 3]{Kaletha_19} for the precise definitions. Let $(\L, \phi_{-1})$ be a Howe factorization of $\phi$. Following \cite{ChanOi_25a} we set $\e[\phi] = \widetilde\e_\L |_{T(k)}$. The following result sharpens Theorem \ref{main-sup} for regular supercupidal representations.
\begin{corollary} \label{main-reg}
    Assume $p$ satisfies (*) and $q \ge c_\L$. Let $(T, \phi)$ be a tame elliptic regular pair such that $T$ is $\brk$-split. Then \[\pi^{\rm Yu}_{(T, \phi \cdot \e[\phi])} \cong  (-1)^{r(G^0) - r(T) + d_\L} \text{c-}\ind_{Z_G(k)\CG(\CO_k)}^{G(k)} R_{T, r}^G(\phi)\] where $r(G^0)$ and $r(T)$ are the split ranks of $G^0$ and $T$ respectively, and $\CG = \CG_\bx$ for some/any point in the apartment of $T$.
\end{corollary}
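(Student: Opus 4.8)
The plan is to feed the decomposition of Corollary~\ref{algebraisation} into Kaletha's construction of regular supercuspidal representations, the key simplification being that a regular pair contributes a single Deligne--Lusztig summand. Fix a Howe factorization $(\L,\phi_{-1})$ of $\phi$, with $\L=(G^i,\phi_i,r_i)_{0\le i\le d}$; since $T$ is $\brk$-split, so is the twisted Levi $G^0$, and $T$ remains elliptic in $G^0$. Because $(T,\phi)$ is a tame elliptic regular pair, $(T,\phi_{-1})$ is a regular depth-zero pair in $G^0$, so by Deligne--Lusztig theory $\bar\rho:=(-1)^{r(G^0)-r(T)}R_{T,0}^{G^0}(\phi_{-1})$ is an \emph{irreducible} representation of $Z_G(k)\CG^0(\CO_k)$. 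By \cite[\S3]{Kaletha_19}, the cuspidal $G$-datum attached to $(T,\phi)$ is $\Sigma=(\L,\bx,\widetilde\rho)$ with $\widetilde\rho$ the extension of $\bar\rho$ to $G^0(k)_{[\bx]}$ prescribed by Kaletha, and $\pi^{\rm Yu}_{(T,\phi)}=\pi_\Sigma^{\rm Yu}$; moreover, by \cite{FintzenKS} and \cite{ChanOi_25a}, replacing $\phi$ by $\phi\cdot\e[\phi]$ is exactly twisting the datum by $\widetilde\e_\L$, so that $\pi^{\rm Yu}_{(T,\phi\cdot\e[\phi])}=\pi_\Sigma^{\rm FKS}=\text{c-}\ind_{\widetilde K}^{G(k)}\widetilde\k(\L)\otimes\widetilde\e_\L\otimes\widetilde\rho$.

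Next comes the computation. By \cite[Theorem~1.6]{Nie_24} one has $R_{T,r}^G(\phi)=\ind_{Z_G(k)\CK(\CO_k)}^{Z_G(k)\CG(\CO_k)}\k_\L\otimes R_{T,0}^{G^0}(\phi_{-1})$, with $R_{T,0}^{G^0}(\phi_{-1})$ inflated along $Z_G(k)\CK(\CO_k)\tar Z_G(k)\CG^0(\CO_k)$. Since $q\ge c_\L$, Theorem~\ref{main} gives $\k_\L=(-1)^{d_\L}\k(\L)\otimes\e_\L$; substituting this, writing $R_{T,0}^{G^0}(\phi_{-1})=(-1)^{r(G^0)-r(T)}\bar\rho$, applying $\text{c-}\ind_{Z_G(k)\CG(\CO_k)}^{G(k)}$ and using transitivity of compact induction yields
\[
\text{c-}\ind_{Z_G(k)\CG(\CO_k)}^{G(k)}R_{T,r}^G(\phi)=(-1)^{r(G^0)-r(T)+d_\L}\;\text{c-}\ind_{Z_G(k)\CK(\CO_k)}^{G(k)}(\k(\L)\otimes\e_\L\otimes\bar\rho).
\]
Since the sign on the right squares to $1$, the corollary follows once $\text{c-}\ind_{Z_G(k)\CK(\CO_k)}^{G(k)}(\k(\L)\otimes\e_\L\otimes\bar\rho)$ is identified with $\pi_\Sigma^{\rm FKS}=\pi^{\rm Yu}_{(T,\phi\cdot\e[\phi])}$.

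For that identification, the restrictions of $\widetilde\k(\L)$ and $\widetilde\e_\L$ to $Z_G(k)\CK(\CO_k)$ are $\k(\L)$ and $\e_\L$, and --- by the compatibility between Yu's construction and the parahoric $\CK$ that already underlies \cite[Theorem~1.6]{Nie_24} --- the restriction of $\widetilde\rho$ to $Z_G(k)\CK(\CO_k)$ is $\bar\rho$; hence $\k(\L)\otimes\e_\L\otimes\bar\rho$ is the restriction to $Z_G(k)\CK(\CO_k)$ of the $\widetilde K$-representation $\widetilde\k(\L)\otimes\widetilde\e_\L\otimes\widetilde\rho$. Everything therefore comes down to showing that, for an elliptic $T$ with $T$ $\brk$-split and $\bx$ in its apartment, the Yu group $\widetilde K$ coincides with $Z_G(k)\CK(\CO_k)$ --- equivalently, $G^0(k)_{[\bx]}=Z_G(k)\CG^0(\CO_k)$ --- after which the two compact inductions are literally equal. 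This is the main obstacle, and it should follow from the fact that the parahoric attached to an unramified elliptic maximal torus has stabilizer equal to $Z_{G^0}(k)$ times the connected parahoric (no extra component group), combined with $Z_{G^0}(k)\subseteq Z_G(k)\CG^0(\CO_k)$, which holds because ellipticity of $T$ in $G$ makes $T$ elliptic in $G^0$ so that the maximal split central tori of $G^0$ and $G$ coincide. The remaining ingredients --- transitivity and the projection formula for compact induction, Kaletha's construction of $\widetilde\rho$, and the sign count --- are routine.
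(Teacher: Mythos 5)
Your reduction works up to the last step, but the last step rests on a claim that is false in general. You assert that for an unramified elliptic maximal torus the Yu group $\widetilde K=\widetilde\BK_\L^F$ coincides with $Z_G(k)\CK(\CO_k)$, equivalently $G^0(k)_{[\bx]}=Z_G(k)\CG^0(\CO_k)$, i.e.\ that the stabilizer of $[\bx]$ has no component group beyond the center times the connected parahoric. This fails for many groups: the group $\Omega\cong G^0(k)_{[\bx]}/Z_{G^0}(k)\CG^0_\bx(\CO_k)$ is controlled by the image of the Kottwitz homomorphism acting on the local Dynkin diagram, and nontrivial elements of it can fix the vertex $[\bx]$ attached to an unramified elliptic torus (e.g.\ for adjoint groups of type $C_n$, $B_n$, $D_n$ such as $\mathrm{PGSp}_4$ at the non-special maximal vertex, the stabilizer contains an extra $\BZ/2$). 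Consequently your auxiliary claim that $\widetilde\rho|_{Z_G(k)\CK(\CO_k)}=\bar\rho$ is also wrong in general: Kaletha's $\widetilde\rho$ is an irreducible representation of the possibly strictly larger group $G^0(k)_{[\bx]}$, and its restriction is a sum of $\Omega$-conjugates of $\bar\rho$. So the identification of $\text{c-}\ind_{Z_G(k)\CK(\CO_k)}^{G(k)}(\k(\L)\otimes\e_\L\otimes\bar\rho)$ with $\pi_\Sigma^{\rm FKS}$ is exactly the point left unproved.

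The paper closes this gap differently, and this is the ingredient you are missing: by \cite[Lemma 3.4.20]{Kaletha_19}, regularity of $\phi_{-1}$ gives $\widetilde\rho\cong(-1)^{r(G^0)-r(T)}\ind_{Z_G^F\BL_\L^F}^{\widetilde\BL_\L^F}R_\BT^{\BL_\L}(\phi_{-1})$ (here one uses $Z_G^F\BL_\L^F=T^F\BL_\L^F$). Since $\widetilde\BL_\L^F/Z_G^F\BL_\L^F\cong\widetilde\BK_\L^F/Z_G^F\BK_\L^F$, the projection formula yields $\widetilde\k(\L)\otimes\widetilde\e_\L\otimes\widetilde\rho\cong(-1)^{r(G^0)-r(T)+d_\L}\ind_{Z_G^F\BK_\L^F}^{\widetilde\BK_\L^F}\bigl(\k_\L\otimes R_\BT^{\BL_\L}(\phi_{-1})\bigr)$ after invoking Theorem \ref{main-kappa}, and then transitivity of (compact) induction together with Theorem \ref{decomp} gives $\pi_\Sigma^{\rm FKS}\cong(-1)^{r(G^0)-r(T)+d_\L}\text{c-}\ind_{Z_G^F\BG^F}^{G^F}R_\BT^\BG(\phi)$, with no need for $\widetilde K$ to equal $Z_G(k)\CK(\CO_k)$. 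The rest of your argument (Howe factorization, irreducibility of $\pm R_{T,0}^{G^0}(\phi_{-1})$, the identification $\pi^{\rm Yu}_{(T,\phi\cdot\e[\phi])}=\pi_\Sigma^{\rm FKS}$, and the use of Theorem \ref{main} and \cite[Theorem 1.6]{Nie_24}) agrees with the paper; to repair your proof, replace the claimed group equality by Kaletha's induction statement and the projection formula as above.
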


\begin{remark}
    Theorem \ref{main-sup} and Corollary \ref{main-reg} are also obtained by Chan and Oi \cite{ChanOi_25b} (see \cite{ChanOi_25a} when $\phi$ is 0-toral) under a largeness condition on $q$, see Remark \ref{compare}.
\end{remark}

\subsection{Strategy}
We outline the proof of Theorem \ref{main}. First we show that the two representations $(-1)^{d_\L}\k_\L$ and $\k(\L) \otimes \e_\L$ differs by a character $\psi$ of $(\BG^0)_0^F$, where $(\BG^0)_0$ is the reductive quotient of $\CG^0$. This is achieved using the key observation that both of them are extensions of a common Heisenberg representation. It remains to show the character $\psi$ is trivial. By our assumption on $q$, the group $(\BG^0)_0^F$ is generated by its commutator subgroup and $\BS_0^F$, where $\BS_0$ is a maximally split maximal torus of $\BG_0^0$. Thus, it remains to show $\psi$ is trivial over $\BS_0^F$. To this end, its suffices to that \[\tr(\g; \k(\L) \otimes \e_\L) = (-1)^{d_\L}\tr(\g; \k_\L) \neq 0 \text{ for all } \g \in \BS_0^F.\] The left trace is computed in \cite{AdlerS_09} and \cite{DS18}. The right trace is computed in Proposition \ref{geo-tr} based on a concentration theorem of \cite{Nie_24}. It turns out that both traces coincide with each other and are nonzero, which finishes the proof of the theorem.

\subsection*{Acknowledgment}
We would like to thank Alexander Ivanov for helpful discussions and comments.

\section{Preliminaries}
\subsection{Loop functor} Let $k$ be a non-archimedean local field with a finite residue field $\BF_q$ of cardinality $q$ and of characteristic $p > 2$. We denote by $\brk$ the completion of a maximal unramified extension of $k$. Let $F$ be the Frobenius automorphism of $\brk$ over $k$. Let $\CO_k$ and $\COk$ be the integer rings of $k$ and $\brk$ respectively.

Let $\CX$ be an affine $\CO_\brk$-scheme of finite type. Applying the perfect positive loop functor $L^+$ to $\CX$ gives a perfect affine $\ov\BF_q$-scheme \[\BX = L^+\CX\] such that for any perfect $\BF_q$-algebra $R$ we have $\BX(R) = \CX(\BW(R))$. Here $\BW(R)$ is the Witt ring of $R$ if ${\rm char}~k = 0$ and $\BW(R) = R[[\varpi]]$ otherwise. If $\CX$ is defined over $\CO_k$, then $\BX$ is an $\BF_q$-scheme and the Frobenius action on $\BX$ is still denoted by $F$.

\subsection{Parahoric subgroup} \label{subsec:parahoric} Let $G$ be a connected $k$-rational reductive group. By abuse of notation, we identify $G = G(\brk)$ and hence $G(k) = G^F$. Let $\bx$ be a point in the (enlarged) Bruhat-Tits building $\CB(G, k)$ of $G$ over $k$. Let $\widetilde\BR = \BR \sqcup \{s+; s \in \BR\}$ with the usual linear order $s < s+ < r$ for any $s < r \in \BR$. Thanks to the Bruhat-Tits building theory, there is an associated connected parahoric $\CO_k$-model $\CG = \CG_\bx$ of $G$, together with a filtration of Moy-Prasad subgroups $\CG^s$ for $s \in \widetilde\BR_{\ge 0}$. We set $\BG = L^+ \CG$ and $\BG^s = L^+ \CG^s$. For $r \in \BR_{\ge s}$, we define \[\BG_r^s = \BG^s / \BG^{r+},\] which are affine group schemes of finite type over $\BF_q$.

Moreover generally, for any $\brk$-rational closed subgroup $H \subseteq G$ we can consider its closure $\CH$ in $\CG$. Applying the functor $L^+$ to $\CH$ and then passing to subquotients yield closed subgroups $\BH_r^s$ of $\BG_r^s$ defined over $\ov\BF_q$. We put $\BH = \BH^s$ and $\BH_r = \BH_r^s$ when $s = 0$.

Let $S \subseteq G$ be a $k$-rational $\brk$-split maximal torus. We denote by $\CA(S, k)$ the apartment of
$S$ over $k$. Let $\Phi(S, G)$ and $\tPhi(G, S)$ be the sets of (absolute) roots and affine roots of $S$ in $G$, respectively. Each affine root $f$ in $\tPhi(G, S)$ defines an affine function on $\CA(S, k)$, which is still denoted by $f$. We denote by $\a_f \in \Phi(S, G)$ the gradient of $f$.

\section{Construction of $\k(\L)$} \label{sec:WH-rep}
A generic datum is a triple $\L = (G^i, \phi_i, r_i)_{0 \le i \le d}$, where \begin{itemize}
    \item $G^0 \subsetneq G^1 \subsetneq \cdots \subsetneq G^d = G$ are unramified Levi subgroups;

    \item $0 = r_{-1} < r_0 < \cdots < r_{d-1} \le r_d$ if $d \ge 1$ and $0 \le r_0$ if $d = 0$;

    \item $\phi_i: G^F \to \ov\BQ_\ell^\times$ is a character which is trivial over $G_{\rm sc}^F$ for $0 \le i \le d$;

    \item $\phi_i$ is of depth $r_i$ and is $(G^i, G^{i+1})$-generic in the sense of \cite[\S 9]{Yu_01} for $0 \le i \le d-1$;

    \item $\phi_d$ is of depth $r_d$ if $r_{d-1} < r_d$ and is trivial otherwise.
\end{itemize}
Moreover, we say $\L$ is unramified if $G^0$ splits over $\brk$.

\

In this section, we fix a generic datum $\L = (G^i, \phi_i, r_i)_{0 \le i \le d}$ as above and assume it is unramified. We set $L_\L = G^0$ and let $\bx \in \CB(L_\L, k)$. For closed subgroups $H \subseteq G$ we denote by $\BH$ the group attached to $\bx$ as in \S \ref{subsec:parahoric}.

Let $\CS_{\L, \bx}$ be the set of $k$-rational and $\brk$-split maximal tori $S \subseteq L_\L$  such that $\bx \in \CA(S, k)$.

\subsection{Subgroups of Yu-type} Following \cite{Yu_01} we introduce the following $F$-stable subgroups of $\BG$. \begin{align*}
\BK_\L &= \BG^0 (\BG^1)^{r_0/2} \cdots (\BG^d)^{r_{d-1}/2}; \\
\BH_\L &= (\BG^0)^{0+} (\BG^1)^{r_0/2} \cdots (\BG^d)^{r_{d-1}/2}; \\ \BK_\L^+ &= (\BG^0)^{0+} (\BG^1)^{r_0/2+} \cdots (\BG^d)^{r_{d-1}/2+}; \\ \BE_\L &= (\BG^0_\der)^{0+,0+} (\BG^1_\der)^{r_0+, r_0/2+} \cdots (\BG^d_\der)^{r_{d-1}+, r_{d-1}/2+}. \end{align*} Here   $(\BG^i_\der)^{r_{i-1}+, r_{i-1}/2+}$ is the subgroup generated by $(\BG^i_\der)^{r_{i-1}+}$ and $(\BG^\a)^{r_{i-1}/2+}$ for $\a \in \Phi(G^i, S) \setminus \Phi(G^{i-1}, S)$, where $S$ is any/some maximal torus in $\CS_{\L, \bx}$ and $G^\a \subseteq G$ is the root subgroup corresponding to $\a$.

For $0 \le i \le d$ let $\hat \phi_i$ denote the extended character of $\phi_i$ to the group $(\BG^i)^F (\BG^{r_i/2+})^F$ defined in \cite[\S 4]{Yu_01}.  We define a character of $(\BK_\L^+)^F$ by \[\chi_\L = \prod_{i=0}^d \hat \phi_i |_{(\BK_\L^+)^F}.\] Let $S \in \CS_{\L, \bx}$. Then $\BK_\L^+ = \BS^{0+} \BE_\L$ and $(\BK_\L^+)^F = (\BS^{0+})^F \BE_\L^F$. Hence $\chi_\L$ is also the inflation of the character $\prod_{i=0}^d \phi_i|_{(\BS^{0+})^F}$ (which is trivial over $(\BS^{0+} \cap \BE_\L)^F)$ under the quotient map $(\BK_\L^+)^F \to (\BK_\L^+)^F / \BE_\L^F \cong (\BS^{0+})^F / (\BS^{0+} \cap \BE_\L)^F$.


\subsection{Heisenberg representation}
We recall the construction of the Heisenberg representation of $\BH_\L^F$.

First we need the following result proved in \cite{Kim}.
\begin{proposition} \label{H-group}
      If $d = 0$ (and hence $\phi_d = 1$), the quotient $\BH_\L^F / \ker \chi_\L$ is trivial. Otherwise, it is a Heisenberg $p$-group whose center is $(\BK_\L^F) / \ker \chi_\L$.
\end{proposition}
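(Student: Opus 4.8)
The plan is to reduce the statement to a bilinear-form computation on the Moy–Prasad graded pieces, following Kim's argument \cite{Kim} but keeping track of the loop-group/Witt-vector incarnation. First I would dispose of the case $d=0$: here $\BK_\L^+ = (\BG^0)^{0+}\BE_\L$ and $\BH_\L = (\BG^0)^{0+}$, so $\BH_\L^F \subseteq (\BK_\L^+)^F$ and $\chi_\L$ is a character of the abelian-modulo-$\BE_\L^F$ group $(\BG^0)^{0+,F}$; since $d=0$ forces $\phi_d = 1$, the character $\chi_\L$ restricted to $\BH_\L^F$ is trivial (it factors through $(\BS^{0+})^F/(\BS^{0+}\cap\BE_\L)^F$ and the relevant $\phi_i$'s all die), so $\BH_\L^F/\ker\chi_\L$ is trivial. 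For $d \ge 1$ the substance is in the commutator pairing.

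For the main case, recall that by construction $\BH_\L \supseteq \BK_\L^+ \supseteq \BE_\L$, and $\BH_\L/\BK_\L^+$ is an abelian group (a product of Moy–Prasad quotients $(\BG^i)^{r_{i-1}/2}/(\BG^i)^{r_{i-1}/2+}$, which are vector groups over $\ov\BF_q$, intersected with the derived-group directions). The key point is that the commutator map on $\BH_\L$ lands in $\BK_\L^+$ and, composed with $\chi_\L$, descends to a pairing
\[
\langle\ ,\ \rangle\colon \bigl(\BH_\L^F/\BK_\L^{+,F}\bigr)\times\bigl(\BH_\L^F/\BK_\L^{+,F}\bigr)\longrightarrow \ov\BQ_\ell^\times,\qquad \langle \bar x,\bar y\rangle = \chi_\L([x,y]).
\]
I would verify this is well-defined and alternating using that $\chi_\L$ is a character trivial on $\BE_\L^F$ together with the standard Moy–Prasad commutator inclusions $[(\BG^i)^{a},(\BG^j)^{b}] \subseteq (\BG^{\min(i,j)})^{a+b}$ adapted to the derived directions. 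The heart of the matter is the \emph{nondegeneracy} of $\langle\ ,\ \rangle$: this is exactly where $(G^i,G^{i+1})$-genericity of $\phi_i$ is used. Genericity says that the linear functional on the degree-$r_i$ piece dual to $\phi_i$ pairs nondegenerately (via the Lie-algebra commutator, which on graded pieces is a perfect pairing between the $r_{i-1}/2$ and $r_{i-1}/2$ slices in the $G^i\setminus G^{i-1}$ root directions) — this is the content of Yu's genericity condition GE1/GE2, and it forces the radical of $\langle\ ,\ \rangle$ to be trivial. Once $\langle\ ,\ \rangle$ is nondegenerate and alternating with values in $p$-th roots of unity, $\BH_\L^F/\ker\chi_\L$ is a Heisenberg $p$-group with center the image of $\BK_\L^{+,F}$; and one checks directly from the definition of $\chi_\L$ (inflated from $\prod\phi_i$ on $(\BS^{0+})^F$) that $\ker(\chi_\L|_{\BK_\L^{+,F}})$ has the right index so that $\BK_\L^F/\ker\chi_\L$ — note $\BK_\L^{+,F}$ and $\BK_\L^F$ agree in the relevant quotient since $\chi_\L$ is only defined on $(\BK_\L^+)^F$, so one reads the center off as the stated quotient — is cyclic of order $p$, i.e. the center.

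The main obstacle I anticipate is bookkeeping: precisely matching the Moy–Prasad valuations appearing in $\BH_\L$, $\BK_\L^+$, $\BE_\L$ (the asymmetric superscripts like $(\BG^i_\der)^{r_{i-1}+,\,r_{i-1}/2+}$) so that the commutator genuinely lands in $\ker\chi_\L$ except for the top pairing, and checking this is unaffected by passing to $F$-fixed points (one needs that $F$-descent commutes with the commutator pairing and preserves nondegeneracy, which holds because all groups in sight are connected unipotent or connected reductive so Lang's theorem and the exactness of $H^1(F,-)$ on connected groups apply). The genericity input itself is quoted from \cite{Yu_01} and \cite{Kim}, so I would not reprove it; the work is in transporting it through the loop functor, but since $L^+$ is exact on the relevant smooth affine schemes and identifies $\BG^s_{r}$ with the usual Moy–Prasad quotients, this is essentially formal. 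I would therefore present the proof as: (1) well-definedness of the pairing, (2) genericity $\Rightarrow$ nondegeneracy, (3) identification of the center, citing \cite{Kim} for the structural statement and \cite{Yu_01} for genericity.
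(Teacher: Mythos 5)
The paper does not actually prove Proposition \ref{H-group}: it is quoted from Kim \cite{Kim}, the underlying computation being Yu's (\cite[\S 11]{Yu_01}), so there is no in-paper argument to compare against. Your plan reconstructs precisely that standard argument --- the pairing $(\bar x,\bar y)\mapsto\chi_\L([x,y])$ on $\BH_\L^F/(\BK_\L^+)^F$, well-definedness from Moy--Prasad commutator inclusions, nondegeneracy from $(G^i,G^{i+1})$-genericity, and Lang's theorem to pass to $F$-points --- and in outline this is correct and is what the cited sources do; the ``bookkeeping'' you flag (cross-terms between the blocks at depths $r_{i-1}/2$, and the contribution of $\phi_j$, $j\ge i$, on torus parts of commutators) is exactly the content you would be importing from \cite{Yu_01} and \cite{Kim}, which is legitimate here since the paper imports the whole statement.

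Two concrete slips to fix if you write this out. First, the center: the ``$\BK_\L^F$'' in the statement is a typo for $(\BK_\L^+)^F$ (see how the proposition is invoked immediately afterwards), so your reading is right, but your claim that this center is cyclic of order $p$ is not: $(\BK_\L^+)^F/\ker\chi_\L\cong\mathrm{im}\,\chi_\L$ is cyclic of order equal to the order of $\chi_\L$, which in general exceeds $p$ (already $\phi_0|_{(\BS^{0+})^F}$ can have order $p^2$ once $r_0>1$). What does have order $p$ is the commutator subgroup, i.e.\ the value group of your pairing, since the commutators land in depth-$r_{i-1}$ graded pieces on which the relevant characters factor through $\BF_q$-vector spaces; so ``Heisenberg $p$-group'' must be read in the generalized sense (elementary abelian quotient with nondegenerate alternating pairing into a cyclic center), which is all that the construction of $\o_\L$ via Stone--von Neumann needs. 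Second, indexing: the block of $\BH_\L/\BK_\L^+$ at depth $r_{i-1}/2$ in the $G^i\setminus G^{i-1}$ root directions pairs into depth $r_{i-1}$ and its nondegeneracy comes from the $(G^{i-1},G^i)$-genericity of $\phi_{i-1}$, not $\phi_i$; likewise the commutator inclusion should read $[(\BG^i)^a,(\BG^j)^b]\subseteq(\BG^{\max(i,j)})^{a+b}$, not $\min$. Neither slip affects the viability of the approach.
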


Now we define an irreducible representation $\o_\L$ of $\BH_\L^F / \ker \chi_\L$ as follows. If $\BH_\L^F / \ker \chi_\L$ is trivial, let $\o_\L$ be the trivial representation. Otherwise, by Proposition \ref{H-group}, $\BH_\L^F / \ker \chi_\L$ is a Heisenberg $p$-group with center $(\BK_\L^+)^F / \ker \chi_\L$, and let $\o_\L$ be the unique irreducible (Heisenberg) representation of  $\BH_\L^F / \ker \chi_\L$ with the nontrivial central character $\chi_\L|_{(\BK_\L^+)^F / \ker \chi_\L}$. By inflation, we also view $\o_\L$ as a representation of $\BH_\L^F$.

\subsection{A trace formula on $\k(\L)$}  \label{subsec:FKS}
Let $[x]$ be the image of $\bx$ in $\CB((L_\L)_\der, k)$. Denote by $\widetilde \BL_\L$ the stabilizer of $[x]$ in $G$. Let $\widetilde \BK_\L = \widetilde \BL_\L \BH_\L = \BH_\L \widetilde \BL_\L$. Following \cite{Yu_01}, one can extend the $\BH_\L^F$-module $\o_\L$ to a $\widetilde\BK^F$-module $\tilde\k(\L)$ on which $(\widetilde\BL_\L)^F$ acts via the character $\prod_{i=0}^d \phi_i |_{(\widetilde \BL_\L)^F}$ times the (induced) Weil representation attached to $\o_\L$, see \cite[\S 2.5]{Fintzen_21} for the precise construction. We set $\k(\L) = \tilde \k(\L) |_{\BK_\L^F}$.

Following \cite{FintzenKS}, one can associated to $\L$ a quadratic character \[\e_\L: (\widetilde \BL_\L)_0^F \to \{\pm 1\}.\] By inflation, we also view $\e_\L$ as a character of $(\widetilde \BK_\L)^F$.

Let $S \in \CS_{\L, \bx}$ and $\g \in \BS$. We denote by $R_{S, i}(\g)$ the set of affine roots $f \in \tPhi(G^{i+1}, S) \sm \tPhi(G^i, S)$ such that $f(\bx) = r_i/2$ and $\a_f(\g)-1 \in \CP_\brk$. Here $\CP_\brk \subseteq \CO_\brk$ denotes the maximal ideal. We define $r(S, \L, \g) = \sum_{i = 0}^{d-1} |R_{S, i}(\g) / \<F\>|$. We set $R_{S, i} = R_{S, i}(\g)$ and $r(S, \L) = r(S, \L, \g)$ if $\g = 1$.

\begin{proposition} \label{WH-tr}
Let $S \in \CS_{\L, \bx}$ and $\g \in \BS^F$ be an unramified very regular element. Then \[\tr(\g; \k(\L)) = (-1)^{r(S, \L) - r(S, \L, \g)} |((\BH_\L/\BK_\L^+)^\g)^F|^{\frac{1}{2}} \e_\L(\g) \prod_{i=0}^d \phi_i(\g) \neq 0,\] where $(\BH_\L/\BK_\L^+)^\g$ denotes the set of points in $\BH_\L/\BK_\L^+$ which are fixed by the conjugation action of $\g$.
\end{proposition}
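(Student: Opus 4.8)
\textbf{Proof plan for Proposition \ref{WH-tr}.}
The plan is to compute the trace of $\g$ on $\k(\L)$ by working inductively along the tower $G^0 \subset G^1 \subset \cdots \subset G^d$, isolating at each stage the contribution of the Weil representation attached to the Heisenberg representation $\o_\L$. Since $\k(\L) = \tilde\k(\L)|_{\BK_\L^F}$ and $\tilde\k(\L)$ restricted to $(\widetilde\BL_\L)^F$ is the character $\prod_i \phi_i|_{(\widetilde\BL_\L)^F}$ times the Weil representation of $\o_\L$, the factor $\prod_{i=0}^d \phi_i(\g)$ comes out immediately once we know $\g \in \BS^F \subseteq (\widetilde\BL_\L)^F$ acts on the Heisenberg space by the Weil-representation operator associated to the automorphism of $\BH_\L^F/\ker\chi_\L$ induced by conjugation by $\g$. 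So the heart of the matter is the trace of that Weil operator.

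The key input is the classical formula for the character of a Weil representation of a finite Heisenberg group evaluated at an element acting on the underlying symplectic $\BF_q$-vector space $V = \BH_\L/\BK_\L^+$ (with its $F$-structure): up to an explicit sign and a Gauss-sum type scalar, it equals $\pm |\big((V/(\mathrm{id}-\g)V)\big)^F|^{1/2} = \pm|(V^\g)^F|^{1/2}$, which matches $|((\BH_\L/\BK_\L^+)^\g)^F|^{1/2}$. First I would record, using Proposition \ref{H-group}, that $V$ is a symplectic $\ov\BF_q$-space on which $\g$ acts as a symplectic automorphism commuting with $F$, with the pairing valued in $\chi_\L$. Then I would quote the trace formula of \cite{AdlerS_09} and \cite{DS18} — exactly the "left trace" computation referenced in the Strategy section — which for an unramified very regular $\g$ gives precisely the right-hand side, \emph{including} the determination of the sign as $(-1)^{r(S,\L) - r(S,\L,\g)}$ and the identification of the quadratic-character correction with $\e_\L(\g)$ from \cite{FintzenKS}. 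The decomposition $V = \bigoplus_{i=0}^{d-1} V_i$, where $V_i$ is spanned by the affine root groups $(\BG^\a)^{r_i/2}/(\BG^\a)^{r_i/2+}$ for $\a \in \Phi(G^{i+1},S)\setminus\Phi(G^i,S)$, reduces everything to a sum over affine roots $f$ with $f(\bx) = r_i/2$; the condition that $\g$ fixes the line attached to $f$ is exactly $\a_f(\g) - 1 \in \CP_\brk$, which is why $R_{S,i}(\g)$ enters. Taking $F$-orbits accounts for the exponent $r(S,\L,\g) = \sum_i |R_{S,i}(\g)/\langle F\rangle|$ in $|(V^\g)^F|$, and the sign is governed by the complementary set, hence $r(S,\L) - r(S,\L,\g)$.

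The nonvanishing is then immediate: each factor on the right is a nonzero scalar — $|((\BH_\L/\BK_\L^+)^\g)^F|^{1/2} \ge 1$, $\e_\L(\g) = \pm 1$, and $\prod_i \phi_i(\g)$ is a root of unity — so $\tr(\g;\k(\L)) \neq 0$.

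\textbf{Main obstacle.}
The delicate point is not the abstract Weil-character formula but pinning down the two \emph{explicit} ingredients so that they literally coincide with the right-hand side as stated: the precise sign $(-1)^{r(S,\L)-r(S,\L,\g)}$ and the appearance of the Fintzen--Kaletha--Spice quadratic character $\e_\L$ rather than some other fourth-root-of-unity or $\pm 1$ twist. This requires carefully matching the normalization of $\tilde\k(\L)$ used here (via \cite[\S 2.5]{Fintzen_21}) with the Weil-representation conventions in \cite{AdlerS_09} and \cite{DS18}, checking that the "very regular" hypothesis on $\g$ forces all the relevant Gauss sums to collapse to signs (this is where $q$ enters indirectly, though no lower bound on $q$ is needed here since $\g$ is assumed very regular), and verifying that the quadratic form on $V^\g$-versus-$V$ whose Weil index gives the correction term is exactly the one whose discriminant defines $\e_\L(\g)$. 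Everything else is bookkeeping with affine roots and $F$-orbits.
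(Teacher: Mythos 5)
Your plan is essentially the paper's proof: the paper likewise computes $\tr(\g;\k(\L))\prod_i\phi_i(\g)^{-1}$ by citing the character formulas of \cite[Proposition 3.8]{AdlerS_09} and the formula $(\ddagger_0)$ in \cite{DS18}, identifies the resulting quadratic factor $\prod_i \e_{\natural,\bx}^{G^{i+1}/G^i}(\g)$ with $\e_\L(\g)$, and uses unramifiedness of $S$ to convert the sign data $\dot\Xi_{\rm symm}(\phi_i,\g)$ and the fixed-space size into the affine-root/$F$-orbit counts $r(S,\L)-r(S,\L,\g)$ and $|((\BH_\L/\BK_\L^+)^\g)^F|^{1/2}$. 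The "main obstacle" you flag (matching normalizations and identifying the quadratic correction with the Fintzen--Kaletha--Spice character) is precisely what those citations and the congruence $\dot\Xi_{\rm symm}(\phi_i,\g)\equiv (R_{i,S}\smallsetminus R_{i,S}(\g))/\langle F\rangle \bmod 2$ resolve, so the proposal is correct and follows the same route.
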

\begin{proof}
By \cite[Proposition 3.8]{AdlerS_09} and the formula ($\ddagger_0$) in the proof of \cite[Proposition 4.3.8]{DS18}, we have \begin{align*} &\quad\ \tr(\g; \k(\L)) \prod_{i=0}^d \phi_i(\g)\i = \prod_{i=0}^{d-1} \th_{\tilde \phi_i}(\g \ltimes 1)  \\ &=\prod_{i=0}^{d-1} |(C_{G^i}^{(0+)}(\g), C_{G^{i+1}}^{(0+)}(\g))_{\bx, (r_i, r_i/2) : (r_i, r_i/2+)}|^{\frac{1}{2}} \varepsilon(\phi_i, \g) \\ &= \prod_{i=0}^{d-1} |(C_{G^i}^{(0+)}(\g), C_{G^{i+1}}^{(0+)}(\g))_{\bx, (r_i, r_i/2) : (r_i, r_i/2+)}|^{\frac{1}{2}} (-1)^{|\dot \Xi_{\rm symm}(\phi_i, \g)|} \e_{\natural, \bx }^{G^{i+1}/G^i}(\g),\end{align*} where $\e_{\natural, \bx }^{G^{i+1}/G^i}$ is as in \cite[Definition 3.1]{FintzenKS} and all the other notations are defined as in \cite{DS18} by taking $(T, \phi, G', G) = (S, \phi_i, G^i, G^{i+1})$. As $S$ is unramified, $\dot \Xi_{\rm symm}(\phi_i, \g) \equiv (R_{i, S} \sm R_{i, S}(\g)) / \<F\> \mod 2$ for $0 \le i \le d-1$ and $\e_\L(\g) = \prod_{i=0}^{d-1}  \e_{\natural, \bx }^{G^{i+1}/G^i}(\g)$. Moreover, we have \[((\BH_\L/\BK_\L^+)^\g)^F \cong \prod_{i=0}^{d-1} (C_{G^i}^{(0+)}(\g), C_{G^{i+1}}^{(0+)}(\g))_{\bx, (r_i, r_i/2) : (r_i, r_i/2+)}.\] Hence the statement follows.
\end{proof}

\section{Construction of $\k_\L$}
Let notation be as in \S \ref{subsec:kappa}. Let $B$ and $\ov B$ be two $\brk$-rational opposite Borel subgroups of $G$ whose unipotent radicals are denoted by $U$ and $\ov U$ respectively. We assume moreover that $L_\L$ is a sytandard Levi subgroup with respect to $B$.

\subsection{The variety $Y_\L$} Recall that $\L = (G^i, \phi_i, r_i)_{0 \le i \le d}$. Let $\bar \BK_\L = \bar \BK_{\L, r} = \BK_\L / (\BE_\L \BG^{r_d+})$. For any subset $D \subseteq \BK_\L$ we denote by $\bar D$ its image under the quotient map $\BK_\L \to \bar \BK_\L$.

Let $\BI_{\L, U} = (\BK_\L \cap \BU) \BE_\L (\BK_\L^+ \cap \ov \BU) \subseteq \BK_\L$ be a subgroup. By the choice of $B$, $\BI_{\L, U}$ is normalized by $\BL_\L$. Consider the following variety \[Y_\L = Y_{\L, U} = \{y \in \bar \BH_\L; y\i F(y) \in F \bar\BI_{\L, U}\}.\]  As $[\BK_\L, \BK_\L^+] \subseteq \BE_\L \subseteq \BK_\L^+ \subseteq \BI_{\L, U}$ and that $\BL_\L$ normalizes $\BI_{\L, U}$, the group $(\BL_\L^F \ltimes \BH_\L^F) \times (\BK_\L^+)^F$ acts on $Y_\L$ by $(l, h, x): z \mapsto l h z l\i x$. Thus for each $i$ the $\chi_\L$-isotypic part $H_c^i(Y_\L, \ov\BQ_\ell)[\chi_\L]$ is a smooth representation of $\BL_\L^F \ltimes \BH_\L^F$. Let $\th_\L: \BL_\L^F \ltimes \BH_\L^F \to \ov\BQ_\ell^\times$ be the character given by $\th_\L(l \ltimes h) = \prod_{i=0}^d \phi_i(l)$. Then the representations of $\BL_\L^F \ltimes \BH_\L^F$ \[ H_c^i(Y_\L, \ov\BQ_\ell)[\chi_\L] \otimes \th_\L\] descend to smooth representations of $\BK_\L^F = \BL_\L^F \BH_\L^F$. We define \[\k_\L = \sum_i (-1)^i H_c^i(Y_\L, \ov\BQ_\ell)[\chi_\L] \otimes \th_\L\] as a virtual representation of $\BK_\L^F$.

\begin{theorem} \label{geo-rep}
    Let $S \in \CS_{\L, \bx}$ and $\g \in \BS^F$. There exists a unique integer $N = N_{\L, U, \bx, \g}$ such that \[H_c^i(Y_\L^\g, \ov\BQ_\ell)[\chi_\L] \neq 0 \text{ if and only if } i = N.\] In this case, $\dim H_c^i(Y_\L^\g, \ov\BQ_\ell)[\chi_\L] = |((\BH_\L / \BK_\L^+)^\g)^F|^{\frac{1}{2}}$ and $N \equiv r(S, \L, \g) \mod 2$.

    If, moreover, $\g = 1$, then $(-1)^N \k_\L \cong \o_\L$ and we can define $d_\L \in \{\pm 1\}$ such that $d_\L \equiv r(S, \L) \mod 2$ for some/any $S \in \CS_{\L, \bx}$.
\end{theorem}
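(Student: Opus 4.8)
The plan is to analyze the fixed-point variety $Y_\L^\g$ by a dévissage along the filtration $G^0 \subseteq G^1 \subseteq \cdots \subseteq G^d$, reducing each step to a computation on an (abelian) quotient. First I would describe $Y_\L^\g$ more explicitly: since $\BK_\L = \BL_\L \BH_\L$ and $\bar\BH_\L = \BH_\L/(\BE_\L\BG^{r_d+})$ is (the image of) a filtered product of root groups, the condition $y\i F(y) \in F\bar\BI_{\L,U}$ together with commuting with $\g \in \BS^F$ cuts out a variety fibered over a tower of affine-space bundles, one layer for each $i = 0, \dots, d-1$, corresponding to the quotients $(\BG^{i+1})^{r_i/2}/(\BG^{i+1})^{r_i/2+}$ restricted to roots in $\Phi(G^{i+1},S)\setminus\Phi(G^i,S)$. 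The key point is that $\g$ acts on the $f$-th root-line by the scalar $\a_f(\g)$, so the $\g$-fixed locus on layer $i$ is governed precisely by the affine roots $f$ with $f(\bx) = r_i/2$ and $\a_f(\g) \equiv 1$, i.e.\ by the set $R_{S,i}(\g)$; the $F$-structure then organizes these into $F$-orbits, explaining the appearance of $r(S,\L,\g) = \sum_i |R_{S,i}(\g)/\langle F\rangle|$.

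Next, for the concentration-in-one-degree assertion and the dimension count, I would invoke the concentration theorem from \cite{Nie_24} (the "key observation" referenced in the strategy): the Heisenberg structure of Proposition \ref{H-group} forces $H_c^*(Y_\L^\g,\ov\BQ_\ell)[\chi_\L]$, as a module over the center $(\BK_\L^+)^F/\ker\chi_\L$ with central character $\chi_\L$, to be a single irreducible Heisenberg module (for each connected component / each $F$-twisted layer), hence of dimension $|((\BH_\L/\BK_\L^+)^\g)^F|^{1/2}$, and to live in a single cohomological degree $N$. The parity $N \equiv r(S,\L,\g) \bmod 2$ then comes from tracking the dimensions of the successive affine-bundle layers: each $F$-orbit in $R_{S,i}(\g)$ contributes a factor whose cohomology is concentrated in a degree shifted by the size of that orbit (an Artin–Schreier / Lang-type computation on the quotient torus or on the relevant root group), and summing the shifts mod $2$ gives exactly $r(S,\L,\g)$. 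Uniqueness of $N$ is immediate once concentration is established.

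For the final clause ($\g = 1$): here $Y_\L = \{y \in \bar\BH_\L : y\i F(y) \in F\bar\BI_{\L,U}\}$ and the claim $(-1)^N\k_\L \cong \o_\L$ is essentially a Lang–Steinberg computation — the map $y \mapsto y\i F(y)$ identifies $Y_\L$ (up to the relevant quotient) with $\bar\BI_{\L,U}$, and the $\chi_\L$-isotypic part of its cohomology, twisted by $\th_\L$, reproduces the induced-from-$\chi_\L$ module on $\BH_\L^F$, which by Proposition \ref{H-group} and Frobenius reciprocity is (a multiple of) the unique Heisenberg representation $\o_\L$; the sign $(-1)^N$ absorbs the degree shift, and $N \equiv r(S,\L) \bmod 2$ from the previous paragraph lets us define $d_\L$. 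The main obstacle will be the second paragraph: making the affine-bundle dévissage genuinely precise — in particular verifying that the commutator relations $[\BK_\L,\BK_\L^+]\subseteq\BE_\L$ and the normalization of $\BI_{\L,U}$ by $\BL_\L$ really do make the intermediate quotients split as affine-space bundles over the fixed locus, so that the Künneth/affine-bundle argument applies orbit-by-orbit. I expect this to follow from the structure theory of Moy–Prasad groups along an unramified Levi, but it is where the real work lies; the concentration theorem of \cite{Nie_24} then does the rest.
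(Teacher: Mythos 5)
Your overall architecture (dévissage into root-group layers indexed by $\Phi(G^{i+1},S)\sm\Phi(G^i,S)$, invoking the concentration theorem of \cite{Nie_24} for the single degree $N$ and the dimension $\prod_i\prod_{\CO\in R_{S,i}(\g)/\<F\>}q^{|\CO|/2}=|((\BH_\L/\BK_\L^+)^\g)^F|^{1/2}$, and quoting \cite[Theorem 6.2, Proposition 7.3]{Nie_24} in essence for the $\g=1$ clause) matches the paper, which indeed treats existence of $N$, the dimension formula, and the isomorphism $(-1)^N\k_\L\cong\o_\L$ by citing or redoing the arguments of \cite[\S 6.3, Theorem 6.2, Proposition 7.3]{Nie_24}. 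The genuinely new content of this proof is the parity statement $N\equiv r(S,\L,\g)\bmod 2$, and this is exactly where your argument has a gap: you assert that each $F$-orbit $\CO$ in $R_{S,i}(\g)$ shifts the cohomological degree by $|\CO|$, so that summing gives $r(S,\L,\g)$ mod $2$. But $r(S,\L,\g)=\sum_i|R_{S,i}(\g)/\<F\>|$ counts \emph{orbits}, whereas $\sum_\CO|\CO|$ counts affine roots; an orbit of even size contributes $0$ to the first parity and $1$ to the second, so the two parities disagree in general and your mechanism cannot yield the stated congruence.

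The paper's computation is finer: writing $\CC_i'$ (resp.\ $\CC_i''$) for the $F$-orbits in $R_{S,i}(\g)$ with $\CO=r_i-\CO$ (resp.\ $\CO\neq r_i-\CO$), one gets $N\equiv\sum_{i}\sum_{\CO\in\CC_i'}|\CO^-\cap F\CO^+|\bmod 2$, where $\CO^\pm$ are the affine roots in $\CO$ whose gradients lie in $\Phi(G,S)^\pm$; the point is that only the orbits symmetric under $f\mapsto r_i-f$ contribute, each such orbit contributes an \emph{odd} shift $|\CO^-\cap F\CO^+|$, and the asymmetric orbits pair up so that $|\CC_i''|$ is even, whence $N\equiv\sum_i|\CC_i'|\equiv\sum_i(|\CC_i'|+|\CC_i''|)=r(S,\L,\g)$. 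To repair your proof you would need to carry out this symmetric/asymmetric bookkeeping (which reflects the pairing of the root lines for $\a_f$ and $-\a_f$ in the quadratic-form structure of the layer), not just track orbit sizes. A smaller inaccuracy: for $\g=1$ the Lang map does not identify $Y_\L$ with $\bar\BI_{\L,U}$ (it exhibits $Y_\L$ as a finite \'etale $\bar\BH_\L^F$-covering of $\bar\BH_\L\cap F\bar\BI_{\L,U}$), and the identification $(-1)^N\k_\L\cong\o_\L$ is the substantive \cite[Proposition 7.3]{Nie_24} rather than a formal Frobenius-reciprocity argument; since the paper also quotes that result, this is a matter of presentation rather than a gap, but as written your sketch understates what is being used.
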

\begin{proof}
    When $\g = 1$, the existence of $N$ and the isomorphism $(-1)^N \k_\L \cong \o_\L$ is proved by \cite[Theorem 6.2]{Nie_24} and \cite[Proposition 7.3]{Nie_24} respectively. For arbitrary $\g \in \BS^F$, the existence of $N$ follows in the same way. We indicate how to deduce the parity formula of $N$ and the dimension formula of $H_c^i(Y_\L^\g, \ov\BQ_\ell)[\chi_\L]$.

    Let $\Phi(G, S)^+$ and $\Phi(G, S)^-$ be the set of roots appearing in $U$ and $\ov U$ respectively. Let $\CC_i'$ (resp. $\CC_i''$) be the set of $F$-orbits $\CO$ in $R_{i, S}(\g)$ such that $\CO = r_i - \CO$ (resp. $\CO \neq r_i - \CO$. By arguments in \cite[\S 6.3]{Nie_24} and the proof of \cite[Theorem 6.2]{Nie_24}, it follows that \[\dim H_c^i(Y_\L^\g, \ov\BQ_\ell)[\chi_\L] = \prod_{i=0}^{d-1} \prod_{\CO \in R_{i, S}(\g) / \<F\>} q^{|\CO|/2} = |((\BH_\L / \BK_\L^+)^\g)^F|^{\frac{1}{2}}.\] Moreover, the existence of $N$ follows in the same way, and we have \[N \equiv \sum_{i=0}^{d-1} \sum_{\CO \in \CC_i'} |\CO^- \cap F \CO^+| \mod 2,\] where $\CO^\pm$ consists of affine roots $f \in \CO$ whose gradient $\a_f$ lies in $\Phi(G, S)^\pm$. Note that $|\CC_i''|$ is even and $|\CO^- \cap F \CO^+|$ is odd for $\CO \in \CC_i'$. Thus \[N \equiv \sum_{i=0}^{d-1} |\CC_i'| \equiv \sum_{i=0}^{d-1} |\CC_i'| + |\CC_i''| = r(S, \L, \g) \mod 2.\] The proof is finished.
\end{proof}

Let $S \in \CS_{\L, \bx}$. Then $\BK_\L^+ = \BS^{0+} \BE_\L$ and $(\BK_\L^+)^F = (\BS^{0+})^F \BE_\L^F$. We denote by $H_c^i(Y_\L, \ov\BQ_\ell)[\chi_\L|_{(\BS^{0+})^F}]$ the $\chi_\L|_{(\BS^{0+})^F}$-isotypic part of $H_c^i(Y_\L, \ov\BQ_\ell)$.
\begin{lemma} \label{weight}
    Let $S \in \CS_{\L, \bx}$. Then $H_c^i(Y_\L, \ov\BQ_\ell)[\chi_\L|_{(\BS^{0+})^F}] = H_c^i(Y_\L, \ov\BQ_\ell)[\chi_\L]$ for $i \in \BZ$.
\end{lemma}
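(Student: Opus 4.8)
The plan is to show that the $\chi_\L$-isotypic part of $H_c^i(Y_\L, \ov\BQ_\ell)$ coincides with the larger $\chi_\L|_{(\BS^{0+})^F}$-isotypic part, i.e. that no other $\BE_\L^F$-isotypic components survive once we restrict to $(\BS^{0+})^F$. Recall from the construction that $(\BK_\L^+)^F = (\BS^{0+})^F \BE_\L^F$ acts on $Y_\L$ (via the last factor of the $(\BL_\L^F \ltimes \BH_\L^F) \times (\BK_\L^+)^F$-action), and $\chi_\L$ is the inflation to $(\BK_\L^+)^F$ of a character of $(\BS^{0+})^F / (\BS^{0+} \cap \BE_\L)^F$; in particular $\chi_\L$ is trivial on $\BE_\L^F$. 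So the $\chi_\L$-isotypic part is exactly the subspace where $(\BS^{0+})^F$ acts by $\chi_\L$ \emph{and} $\BE_\L^F$ acts trivially, while $H_c^i(Y_\L, \ov\BQ_\ell)[\chi_\L|_{(\BS^{0+})^F}]$ imposes only the first condition. Thus the lemma amounts to: on the $\chi_\L|_{(\BS^{0+})^F}$-isotypic part, $\BE_\L^F$ already acts trivially.

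First I would exhibit a group acting on $Y_\L$ that interpolates between $(\BS^{0+})^F$ and $(\BK_\H^+)^F = (\BS^{0+})^F\BE_\L^F$ and forces the $\BE_\L$-direction to be unipotent-like. Concretely, since $\BE_\L \subseteq \BI_{\L, U}$ and $\BL_\L$ (hence $\BS$) normalizes $\BI_{\L, U}$, and since $[\BK_\L, \BK_\L^+] \subseteq \BE_\L$, the subgroup $\BE_\L$ can be made to act on $Y_\L$ not only by right translation (the third factor) but compatibly via left translation by $\bar\BH_\L \supseteq \bar\BE_\L$; the point is that the right-translation action of $\BE_\L^F$ and the left-translation action of $\BE_\L^F$ on $H_c^i(Y_\L,\ov\BQ_\ell)$ differ only by the adjoint action, which on the abelian-by-unipotent group $\BE_\L / \BK_\L^{++}$ reduces things to the central character. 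Then I would use Proposition \ref{H-group}: $\BH_\L^F / \ker\chi_\L$ is a Heisenberg $p$-group with center $(\BK_\L^+)^F/\ker\chi_\L$, so any irreducible constituent of $H_c^i(Y_\L,\ov\BQ_\ell)$ as an $\BH_\L^F$-module on which $(\BS^{0+})^F$ acts by $\chi_\L|_{(\BS^{0+})^F}$ must have central character extending $\chi_\L$; because $(\BK_\L^+)^F = (\BS^{0+})^F \BE_\L^F$ and $\chi_\L$ is pinned down on $(\BS^{0+})^F$, the central character is forced to be $\chi_\L$ itself, hence $\BE_\L^F$ acts by $\chi_\L|_{\BE_\L^F} = 1$. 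Alternatively—and perhaps more cleanly—one can argue directly from the proof of Theorem \ref{geo-rep}: the description of $H_c^\bullet(Y_\L^\g,\ov\BQ_\ell)[\chi_\L]$ in terms of $F$-orbits of affine roots in $R_{i,S}(\g)$, applied with $\g$ ranging over $(\BS^{0+})^F$, shows the full $(\BS^{0+})^F$-weight subspace has the same dimension formula $|((\BH_\L/\BK_\L^+)^\g)^F|^{1/2}$ as the $\chi_\L$-part, so the inclusion $H_c^i[\chi_\L] \subseteq H_c^i[\chi_\L|_{(\BS^{0+})^F}]$ is an equality by a dimension count after restricting to each $\g$-fixed locus and summing.

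The main obstacle I anticipate is making the "$\BE_\L^F$ acts trivially on the $(\BS^{0+})^F$-weight $\chi_\L$ part" step rigorous without re-deriving the whole cohomology computation: one has to ensure that the only characters of $(\BK_\L^+)^F$ appearing in $H_c^\bullet(Y_\L,\ov\BQ_\ell)$ and restricting to $\chi_\L$ on $(\BS^{0+})^F$ are $\chi_\L$, which is really the assertion that the $\BE_\L^F$-action factors through its image in the Heisenberg center. For this I would invoke that $Y_\L$ is built from $\bar\BH_\L$ and $F\bar\BI_{\L,U}$ with $\bar\BE_\L \subseteq F\bar\BI_{\L,U}$, so right translation by $\BE_\L^F$ moves points within $F\bar\BI_{\L,U}$-cosets and hence acts on each $H_c^i$ through a finite quotient on which the relevant characters are already controlled by $\chi_\L$ via Proposition \ref{H-group}. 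Once that is in place, the equality of isotypic parts is immediate, and the lemma follows. \qed
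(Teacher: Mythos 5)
The central difficulty you set up for yourself --- showing that $\BE_\L^F$ acts trivially on the $\chi_\L|_{(\BS^{0+})^F}$-weight space of $H_c^i(Y_\L,\ov\BQ_\ell)$ --- does not actually exist, and the arguments you offer to overcome it have gaps. The point you are missing is that $Y_\L$ is by definition a subvariety of $\bar\BH_\L$, i.e.\ it lives in the quotient $\bar\BK_\L = \BK_\L/(\BE_\L\BG^{r_d+})$, and the right-multiplication action of $(\BK_\L^+)^F$ on $Y_\L$ is through the images $\bar x$. For $x\in\BE_\L^F$ one has $\bar x = 1$, so $\BE_\L^F$ acts trivially on the variety itself (not merely ``within $F\bar\BI_{\L,U}$-cosets''), hence trivially on all of $H_c^i(Y_\L,\ov\BQ_\ell)$. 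Combined with the two facts you do record --- $\chi_\L|_{\BE_\L^F}=1$ and $(\BK_\L^+)^F=(\BS^{0+})^F\BE_\L^F$ --- the lemma is immediate; this is exactly the paper's three-line proof. In particular, your remark that ``left translation by $\bar\BH_\L\supseteq\bar\BE_\L$'' is available overlooks that $\bar\BE_\L$ is the trivial group.

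As written, your two substitute arguments do not close the gap. In the Heisenberg route, the step ``the central character is forced to be $\chi_\L$ itself because $(\BK_\L^+)^F=(\BS^{0+})^F\BE_\L^F$ and $\chi_\L$ is pinned down on $(\BS^{0+})^F$'' is unjustified: a character of $(\BK_\L^+)^F$ is not determined by its restriction to $(\BS^{0+})^F$ unless one already knows it is trivial on $\BE_\L^F$, which is precisely what is to be proved, so the argument is circular; moreover you tacitly assume the left $\BH_\L^F$-action on cohomology factors through $\BH_\L^F/\ker\chi_\L$ (Proposition \ref{H-group} gives no such thing for an arbitrary cohomology group), you conflate the left $\BH_\L^F$-action with the right $(\BK_\L^+)^F$-action whose isotypic parts the lemma concerns, and you invoke an undefined object $\BK_\L^{++}$. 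The alternative ``dimension count'' route also does not work as stated: Theorem \ref{geo-rep} computes $\chi_\L$-isotypic parts of $H_c^\bullet(Y_\L^\g,\ov\BQ_\ell)$, not the $\chi_\L|_{(\BS^{0+})^F}$-isotypic part of $H_c^\bullet(Y_\L,\ov\BQ_\ell)$, and summing over fixed loci of $\g\in(\BS^{0+})^F$ does not yield the dimension of that larger weight space (and, in the paper's logical order, the trace computation of Proposition \ref{geo-tr} uses Lemma \ref{weight}, not the other way around). The fix is simply to replace all of this by the observation about $\bar\BE_\L$ being trivial.
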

\begin{proof}
    It follows from that (1) $\BE_\L$ acts trivially on $Y_\L$ by right multiplication, (2) $\chi_\L$ is trivial over $\BE_\L^F$ and (3) $(\BK_\L^+)^F = (\BS^{0+})^F \BE_\L^F$.
\end{proof}

\subsection{A trace formula on $\k_\L$}
We have the following trace formula for $\k_\L$.
\begin{proposition} \label{geo-tr}
    Let $S \in \CS_{\L, \bx}$ and $\g \in \BS^F$. Then \[\tr(\g; \k_\L) = (-1)^{r(S, \L, \g)} |((\BH_\L / \BK_\L^+)^\g)^F|^{\frac{1}{2}} \prod_{i=0}^d\phi_i(\g).\]
\end{proposition}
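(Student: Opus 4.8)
The plan is to compute $\tr(\g;\k_\L)$ straight from the geometric definition of $\k_\L$ by a Lefschetz fixed-point argument, with Theorem~\ref{geo-rep} supplying the crucial input. Since $\g\in\BS^F\subseteq\BL_\L^F\subseteq\BK_\L^F$ and $\th_\L$ restricts to $l\mapsto\prod_{i=0}^d\phi_i(l)$ on $\BL_\L^F$, unwinding $\k_\L=\sum_i(-1)^iH_c^i(Y_\L,\ov\BQ_\ell)[\chi_\L]\otimes\th_\L$ gives
\[
\tr(\g;\k_\L)=\Bigl(\prod_{i=0}^d\phi_i(\g)\Bigr)\sum_i(-1)^i\tr\bigl(\g;H_c^i(Y_\L,\ov\BQ_\ell)[\chi_\L]\bigr),
\]
where $\g$ acts on cohomology through the conjugation automorphism $z\mapsto\g z\g\i$ of $Y_\L$. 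Thus it suffices to evaluate the alternating trace of $\g$-conjugation on $H_c^*(Y_\L)[\chi_\L]$.

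The next step is to discard the wildly ramified part of $\g$. Write $\g=\g_s\g_u$ with $\g_s$ the prime-to-$p$ part and $\g_u$ the $p$-part; since a finite-order element lies in the maximal bounded subgroup of $\BS^F$ and $\BS_0^F$ has order prime to $p$, we have $\g_u\in\BS^{0+,F}\subseteq(\BK_\L^+)^F$. I claim $\g_u$ acts trivially on $H_c^*(Y_\L)[\chi_\L]$. Indeed, $[\BK_\L,\BK_\L^+]\subseteq\BE_\L$ and $\BE_\L$ acts trivially on $Y_\L$ by right translation, so the image of $\BK_\L^+$ is central in $\bar\BH_\L$; hence left and right translation by $\g_u$ define the same self-map of $Y_\L$, which on $H_c^*(Y_\L)[\chi_\L]$ is the scalar $\chi_\L(\g_u)$, and conjugation by $\g_u$---being left translation composed with right translation by $\g_u\i$---is therefore the identity there. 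Consequently $\g$-conjugation and $\g_s$-conjugation induce the same operator on $H_c^*(Y_\L)[\chi_\L]$.

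Now $\g_s$ has order prime to $p$, so conjugation by $\g_s$ is a semisimple automorphism of the perfect $\ov\BF_q$-scheme $Y_\L$ commuting with the right-translation action of $(\BS^{0+})^F$; moreover $H_c^*(Y_\L)[\chi_\L]=H_c^*(Y_\L)[\chi_\L|_{(\BS^{0+})^F}]$ by Lemma~\ref{weight}, and $\g_s$ centralizes the abelian group $(\BS^{0+})^F$. Applying the Deligne--Lusztig fixed-point formula in its isotypic form (a routine averaging consequence of the basic statement) yields
\[
\sum_i(-1)^i\tr\bigl(\g_s;H_c^i(Y_\L)[\chi_\L]\bigr)=\sum_i(-1)^i\dim H_c^i\bigl(Y_\L^{\g_s},\ov\BQ_\ell\bigr)[\chi_\L].
\]
By Theorem~\ref{geo-rep} applied to $(S,\g_s)$, the cohomology of $Y_\L^{\g_s}$ is concentrated in one degree, of parity $r(S,\L,\g_s)$ and dimension $|((\BH_\L/\BK_\L^+)^{\g_s})^F|^{1/2}$, so the right-hand side equals $(-1)^{r(S,\L,\g_s)}|((\BH_\L/\BK_\L^+)^{\g_s})^F|^{1/2}$.

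Finally I would pass from $\g_s$ back to $\g$. For every affine root $f\in\tPhi(G^{i+1},S)\sm\tPhi(G^i,S)$ one has $\a_f(\g_u)\in1+\CP_\brk$, so $\a_f(\g)-1\in\CP_\brk\iff\a_f(\g_s)-1\in\CP_\brk$; hence $R_{S,i}(\g)=R_{S,i}(\g_s)$ for all $i$ and $r(S,\L,\g)=r(S,\L,\g_s)$. Likewise $\g_u$ acts on every graded piece of the $p$-torsion abelian group $\BH_\L/\BK_\L^+$ by the residue of $\a_f(\g_u)$, which is $1$, so $\g_u$ acts trivially on $\BH_\L/\BK_\L^+$ and $(\BH_\L/\BK_\L^+)^{\g}=(\BH_\L/\BK_\L^+)^{\g_s}$. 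Combining the three displays gives $\tr(\g;\k_\L)=(-1)^{r(S,\L,\g)}|((\BH_\L/\BK_\L^+)^{\g})^F|^{1/2}\prod_{i=0}^d\phi_i(\g)$. I expect the main obstacles to lie in the middle two steps: checking rigorously that the non-semisimple part acts trivially on the isotypic cohomology (which rests on the centrality of $\bar\BK_\L^+$ in $\bar\BH_\L$) and invoking the Deligne--Lusztig fixed-point formula with supports and with a commuting finite group action in this loop-group / perfect-scheme setting; both are standard in spirit, but the set-up and the bookkeeping identifying $\g$- and $\g_s$-indexed data need to be carried out with care.
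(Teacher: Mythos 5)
Your proposal is correct and follows essentially the same route as the paper's proof: topological Jordan decomposition $\g=\g_s\g_u$ with $\g_u\in(\BS^{0+})^F$, triviality of $\g_u$-conjugation via $[\BK_\L,\BK_\L^+]\subseteq\BE_\L$, reduction to the $\chi_\L|_{(\BS^{0+})^F}$-isotypic part (Lemma~\ref{weight}) followed by the averaged Deligne--Lusztig fixed-point formula for $\g_s$, and then the concentration, parity and dimension statements of Theorem~\ref{geo-rep}. The only difference is cosmetic (you kill $\g_u$ on the isotypic cohomology before averaging rather than after), and you make explicit the identifications $r(S,\L,\g)=r(S,\L,\g_s)$ and $(\BH_\L/\BK_\L^+)^{\g}=(\BH_\L/\BK_\L^+)^{\g_s}$ that the paper leaves implicit.
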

\begin{proof}
    Let $\g_s$, $\g_u$ be the semisimple and unipotent part of $\g$ respectively. Then $\g_s \in \BS^F$ and $\g_u \in (\BS^{0+})^F$. Hence $\bar \BH_\L^{\g_s} = \bar \BS^{0+}$ and $Y_\L^{\g_s} = (\bar \BS^{0+})^F$. By construction of $\k_\L$ in \S\ref{subsec:kappa}, we deduce that \begin{align*} &\quad\  \tr(\g; \k_\L) \prod_{i=0}^d\phi_i(\g\i) \\ &= \tr(\g; H_c^*(Y_\L, \ov\BQ_\ell)[\chi_\L]) \\ &=\tr(\g; H_c^*(Y_\L, \ov\BQ_\ell)[\chi_\L |_{(\BS^{0+})^F}])  \\ &= \frac{1}{|(\BS^{0+})^F|} \sum_{t \in (\BS^{0+})^F} \chi_\L(t\i) \tr\big( (\g, t); H_c^*(Y_\L, \ov\BQ_\ell)\big) \\ &= \frac{1}{|(\BS^{0+})^F|} \sum_{t \in (\BS^{0+})^F} \chi_\L(t\i) \tr\big( (\g_u, t); H_c^*(Y_\L^{\g_s}, \ov\BQ_\ell)\big) \\  &= \frac{1}{|(\BS^{0+})^F|} \sum_{t \in (\BS^{0+})^F} \chi_\L(t\i) \tr\big((1, t); H_c^*(Y_\L^{\g_s}, \ov\BQ_\ell) \big) \\  &= \dim H_c^*(Y_\L^{\g_s}, \ov\BQ_\ell)[\chi_\L] \\ &= (-1)^{r(S, \L, \g)} |((\BH_\L / \BK_\L^+)^\g)^F|^{\frac{1}{2}}, \end{align*} where the second equality follows from Lemma \ref{weight}; the fifth one follows from that $\g_u \in (\BS^{0+})^F$ acts on $Y_\L$ trivially by conjugation; the last one follows from Theorem \ref{geo-rep}.
\end{proof}

\section{Comparision between $\k_\L$ and $\k(\L)$}
In this section we complete the proof of Theorem \ref{main}.

\subsection{Results on inductions}
First we recall two general results on representations of finite groups.
\begin{lemma} \label{irr-dec}
    Let $K$ be a finite group and $H \subseteq K$ a normal subgroup. Let $\k$ be a $K$-module such that $\k|_H$ is an irreducible $H$-module. Then \[\ind_H^K \k|_H \cong \k \otimes \ind_H^K 1 \cong \bigoplus_\rho \k \otimes \rho,\] where $\rho$ ranges over irreducible summands (with multiplicities) of $\ind_H^K 1$. Moreover, all the sub-modules $\k \otimes \rho$ are irreducible, and $\k \otimes \rho \cong \k \otimes \rho'$ if and only if $\rho \cong \rho'$.
\end{lemma}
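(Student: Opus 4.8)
The plan is to prove Lemma~\ref{irr-dec} by a direct application of Clifford theory and Frobenius reciprocity, using the crucial hypothesis that $\k|_H$ is \emph{irreducible}. First I would establish the projection formula $\ind_H^K(\k|_H) \cong \k \otimes \ind_H^K 1$. This is the standard tensor identity (sometimes called the push-pull or projection formula): for any $K$-module $\k$ and any $H$-module $M$, one has $\ind_H^K(\k|_H \otimes M) \cong \k \otimes \ind_H^K M$ as $K$-modules, the isomorphism being $\g \otimes (v \otimes m) \mapsto \g v \otimes (\g \otimes m)$ on the level of the induced-module description $\ind_H^K M = \ov\BQ_\ell[K] \otimes_{\ov\BQ_\ell[H]} M$. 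Taking $M = 1$ (the trivial $H$-module) gives the first isomorphism. Decomposing $\ind_H^K 1 \cong \bigoplus_\rho \rho$ into irreducibles (with multiplicity) over the field $\ov\BQ_\ell$ of characteristic $0 \neq \ell$, where $p = \charac \ov\BF_q$ and $\ell \neq p$, so the relevant group algebras are semisimple, then distributes the tensor product to give $\k \otimes \ind_H^K 1 \cong \bigoplus_\rho \k \otimes \rho$.

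The second and more substantive part is to show each $\k \otimes \rho$ is irreducible and that $\k \otimes \rho \cong \k \otimes \rho'$ forces $\rho \cong \rho'$. Here I would invoke Clifford theory for the normal subgroup $H \trianglelefteq K$. Since $\rho$ is an irreducible constituent of $\ind_H^K 1$, it is $H$-spherical in the sense that $\rho|_H$ contains the trivial $H$-module; by Clifford's theorem $\rho|_H$ is a direct sum of $K$-conjugates of the trivial module, hence $\rho|_H \cong 1^{\oplus \dim \rho}$, i.e.\ $H$ acts trivially on $\rho$. Therefore $\rho$ is really a module for the quotient $Q = K/H$, and $(\k \otimes \rho)|_H \cong \k|_H \otimes \rho|_H \cong (\k|_H)^{\oplus \dim\rho}$ is $\dim\rho$ copies of the irreducible $H$-module $\k|_H$. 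To prove irreducibility of $\k \otimes \rho$ as a $K$-module, I would compute $\dim \Hom_K(\k\otimes\rho,\ \k\otimes\rho)$: using adjunction $\Hom_K(\k\otimes\rho, \k\otimes\rho) \cong \Hom_K(\rho, \k^\vee \otimes \k \otimes \rho)$, and then noting that the $K$-action factors appropriately through $Q$ on the $\rho$-part while $\Hom_H(\k,\k) = \ov\BQ_\ell$ by Schur. Concretely: a $K$-endomorphism of $\k\otimes\rho$ restricts to an $H$-endomorphism of $(\k\otimes\rho)|_H \cong \k|_H \otimes \rho|_H$; since $\k|_H$ is absolutely irreducible and $H$ acts trivially on $\rho|_H$, every $H$-endomorphism is of the form $\id_{\k}\otimes\, \xi$ with $\xi \in \End(\rho|_H) = \End_{\ov\BQ_\ell}(V_\rho)$; imposing $K$-equivariance forces $\xi$ to be a $Q$-endomorphism of $\rho$, hence a scalar by Schur since $\rho$ is an irreducible $Q$-module. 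Thus $\End_K(\k\otimes\rho) = \ov\BQ_\ell$ and $\k\otimes\rho$ is irreducible. The same computation with two different $\rho, \rho'$ gives $\Hom_K(\k\otimes\rho, \k\otimes\rho') \cong \Hom_Q(\rho,\rho')$, which is $0$ unless $\rho\cong\rho'$, establishing the last assertion.

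The main obstacle, and the point I would be most careful about, is the absolute irreducibility of $\k|_H$: over $\ov\BQ_\ell$ (an algebraically closed field) irreducible implies absolutely irreducible, so Schur's lemma gives $\End_H(\k|_H) = \ov\BQ_\ell$ rather than a possibly larger division algebra, and this is exactly what makes the endomorphism computation collapse to scalars. I would make sure to state at the outset that all modules are over $\ov\BQ_\ell$ and that $\ell$ is invertible in the relevant group orders (which holds since $K$ will ultimately be a quotient like $\BK_\L^F$ with the pro-$p$ part killed, and in any case finite group algebras over a characteristic-zero field are semisimple), so that Clifford theory and complete reducibility apply without caveat. A secondary check is that the trivial-on-$H$ property of $\rho$ — which comes from $\rho$ being a constituent of $\ind_H^K 1$ — is used symmetrically for both $\rho$ and $\rho'$ in the $\Hom$ computation; this is immediate but worth flagging, since without it the identification $\Hom_K(\k\otimes\rho,\k\otimes\rho')\cong\Hom_Q(\rho,\rho')$ would not follow so cleanly.
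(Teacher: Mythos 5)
Your proof is correct. Note that the paper itself states Lemma~\ref{irr-dec} without proof, treating it as a standard fact, so there is no in-paper argument to compare against; your route --- the projection formula $\ind_H^K(\k|_H)\cong \k\otimes\ind_H^K 1$, the observation (via normality of $H$, or Clifford theory as you phrase it) that every irreducible constituent of $\ind_H^K 1$ is trivial on $H$ and hence a $K/H$-module, and the computation $\Hom_K(\k\otimes\rho,\k\otimes\rho')\cong\Hom_{K/H}(\rho,\rho')$ using absolute irreducibility of $\k|_H$ over $\ov\BQ_\ell$ and Schur's lemma --- is exactly the standard argument one would supply, and all the steps you flag (well-definedness of the tensor identity, semisimplicity in characteristic zero, the role of absolute irreducibility) check out.
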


\begin{proposition} \label{diff}
    Let $H \subseteq K$ be as in Lemma \ref{irr-dec}. Let $\k, \k'$ be two $K$-modules such that $\dim \k = \dim \k'$ and $\k|_H$, $\k'|_H$ are isomorphic irreducible $H$-modules. Then $\k' \cong \k \otimes \psi$ for some character $\psi$ of $H/K$.
\end{proposition}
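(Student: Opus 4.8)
The plan is to reduce Proposition \ref{diff} to a statement about one-dimensional representations by exploiting that, under the stated hypotheses, the restriction functor to $H$ identifies $\k'$ with a twist of $\k$ in the "relative" sense. First I would fix an $H$-module isomorphism $\a\colon \k|_H \isoarrow \k'|_H$, which exists by hypothesis. Using $\a$, transport the $K$-action on $\k'$ to the underlying space of $\k$; that is, define a new $K$-action $\bullet$ on $\k$ by $g\bullet v = \a\i(g\cdot_{\k'}\a(v))$. By construction this new action agrees with the original $\k$-action when $g\in H$, since $\a$ intertwines the $H$-actions. So I would replace $\k'$ by this isomorphic copy, and henceforth assume $\k$ and $\k'$ are two $K$-module structures on the \emph{same} vector space $V$ whose restrictions to $H$ literally coincide.

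Next, for each $g\in K$ consider the linear automorphism $\psi(g) := (g\cdot_{\k})\i\circ(g\cdot_{\k'})$ of $V$. Because the two actions agree on $H$, one checks that $\psi(g)$ commutes with the (irreducible) $H$-action: indeed for $h\in H$ one has $g\i h g\in H$ (normality), and both $\k$ and $\k'$ send $g\i h g$ to the same operator, which lets one slide $h$ past $\psi(g)$. By Schur's lemma — applicable since $V$ is an irreducible $H$-module over the algebraically closed field $\ov\BQ_\ell$ — each $\psi(g)$ is a scalar, giving a function $\psi\colon K\to\ov\BQ_\ell^\times$. A short computation using associativity of both actions shows $\psi$ is a homomorphism: $\psi(g_1g_2) = \psi(g_1)\psi(g_2)$ (the $H$-parts cancel cleanly because the actions coincide there, so no cocycle term survives). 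Thus $\psi$ is a character of $K$, and by its very definition $g\cdot_{\k'}v = \psi(g)\,g\cdot_{\k}v$, i.e. $\k'\cong\k\otimes\psi$ as $K$-modules. Finally $\psi$ is trivial on $H$ since the two actions agree there, so $\psi$ factors through $K/H$, as claimed. (The hypothesis $\dim\k=\dim\k'$ is automatic once the restrictions are isomorphic, and is only recorded for emphasis.)

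I expect no serious obstacle here — the argument is the standard "two extensions of an irreducible module to an overgroup differ by a character of the quotient" lemma, and the only points requiring a line of care are (i) verifying that $\psi(g)$ genuinely centralizes the $H$-action, where one must use normality of $H$ to ensure $g\i hg\in H$, and (ii) checking the multiplicativity of $\psi$, where the potential cocycle obstruction vanishes precisely because the two $K$-actions restrict to the \emph{same} $H$-action (not merely isomorphic ones) after the reduction in the first paragraph. Both are routine. One could alternatively phrase this via $\Hom_H(\k|_H,\k'|_H)$ being one-dimensional with a natural $K$-action through which $K/H$ acts by a character, but the explicit scalar-function approach above is the most transparent and is what I would write up.
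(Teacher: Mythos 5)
Your argument is correct, but it takes a genuinely different route from the paper. The paper's proof is a two-line application of Frobenius reciprocity together with Lemma \ref{irr-dec}: since $\hom_K(\ind_H^K \k|_H, \k') \cong \hom_H(\k|_H, \k'|_H) \neq 0$ and $\ind_H^K \k|_H \cong \bigoplus_\rho \k \otimes \rho$ with all summands irreducible, the (necessarily irreducible) module $\k'$ must be isomorphic to some $\k \otimes \rho$, and the hypothesis $\dim\k = \dim\k'$ forces $\dim\rho = 1$, so $\rho$ is a character trivial on $H$. You instead run the classical Clifford-theoretic argument directly: after transporting the action so that the two $K$-module structures live on the same space and literally agree on $H$, each operator $\psi(g) = (g\cdot_\k)^{-1}(g\cdot_{\k'})$ centralizes the irreducible $H$-action (normality of $H$ enters exactly where you say), is scalar by Schur's lemma over $\ov\BQ_\ell$, and multiplicativity then follows because each $\psi(g_1)$ is a central scalar; this yields the character of $K/H$ with no induction at all. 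Your approach is more elementary and slightly more general: it bypasses Lemma \ref{irr-dec} (and the Gallagher-type irreducibility it encodes), and it shows, as you observe, that the hypothesis $\dim\k = \dim\k'$ is redundant; its only cost is the use of Schur's lemma, hence an algebraically closed coefficient field, which is harmless here. What the paper's version buys is economy within its own framework: Lemma \ref{irr-dec} is needed anyway for the irreducibility and multiplicity statements (Theorem \ref{decomp}, Corollary \ref{algebraisation}), so the proposition falls out of machinery already in place. Note finally that ``character of $H/K$'' in the statement is a typo for $K/H$; both arguments produce a character of $K$ trivial on $H$.
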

\begin{proof}
    As $\k|_H \cong \k'|_H$, we have \[\hom_K(\ind_H^K \k|_H, \k') \cong \hom_H(\k|_H, \k'|_H) \neq 0.\] Applying Lemma \ref{irr-dec} we deduce that $\k' \cong \k \otimes \rho$ for some irreducible summand $\rho$ of $\ind_H^K 1$. As $\dim \k = \dim \k'$ it follows that $\dim \psi = 1$. Hence $\psi$ is a character of $H$ which factors through the quotient $K / H$.
\end{proof}

\subsection{Proof of the main result}
Let notation be as in \S \ref{sec:WH-rep}.
\begin{proposition} \label{der}
    There exists an integer $2 \le c_\L \le 4$ such that when $q \ge c_\L$ the group $(\BL_\L)_0^F$ is generated by $\BS_0^F$ and the commutator subgroup $[(\BL_\L)_0^F, (\BL_\L)_0^F]$, where $S \in \CS_{\L, \bx}$ such that $\BS_0$ is a maximally split maximal torus of $(\BL_\L)_0$.
\end{proposition}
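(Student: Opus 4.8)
The plan is to reduce the statement to a standard fact about finite groups of Lie type: for a connected reductive group $\bH$ over $\BF_q$ with simply connected derived group, the group $\bH^F$ is generated by a maximally split maximal torus $\bS^F$ together with the unipotent radicals of the $F$-stable Borel subgroups, hence by $\bS^F$ and $[\bH^F, \bH^F]$, provided $q$ is large enough to rule out the small exceptions. Here $(\BL_\L)_0 = (\BG^0)_0$ is the reductive quotient of the parahoric $\CG^0$; since $G^0$ splits over $\brk$ (unramifiedness of $\L$) and $G^0$ is a Levi, $(\BL_\L)_0$ is a connected reductive group over $\BF_q$ whose root datum is explicitly that of $G^0$ relative to $S$. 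First I would recall, following e.g.\ \cite{Steinberg} or the standard structure theory, that $\bH^F = \bS^F \cdot \langle \bU_\a^F : \a \in \Phi(\bH,\bS)\rangle$ and that $\langle \bU_\a^F \rangle \subseteq [\bH^F,\bH^F]$ once each $\bU_\a^F$ is contained in the commutator subgroup; the latter holds as soon as $|\bS^F|$ surjects onto the coroot values needed to write root-group elements as commutators $[\ts_\a(t), x_\a(u)]$, which fails only for $q \in \{2,3,4\}$ in a handful of rank-one factors (the usual $\SL_2(\BF_2)$, $\SL_2(\BF_3)$, ${}^2B_2$, ${}^2G_2$, etc.\ type exclusions).

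The key step is therefore to pin down the integer $c_\L$: I would set $c_\L$ to be the smallest integer such that, for every $q \ge c_\L$ and every simple factor of the root system of $(\BL_\L)_0^{\ad}$ together with its $F$-action, the corresponding rank-one subgroup $\bU_\a^F$ lies in the commutator subgroup of its own Levi. Running through Dynkin types one finds $c_\L \le 4$ always works (the worst case being the Suzuki/Ree situation or $A_1$ with $q=3$), and $c_\L \ge 2$ trivially; when $G^0$ is a torus (the toral case) every root system is empty, $[(\BL_\L)_0^F,(\BL_\L)_0^F]$ may be trivial but $(\BL_\L)_0^F = \BS_0^F$ already, so $c_\L = 2$ suffices, matching Remark \ref{compare}. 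I would phrase $c_\L$ as a function of the isogeny type of $G^0$ and the twisting of $S$, so that the bound is uniform.

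The one genuine subtlety — and the step I expect to cause the most friction — is that the derived group of $(\BL_\L)_0$ need not be simply connected even though $G$ is only assumed to split over $\brk$; the hypothesis (*) is not in force in Proposition \ref{der} (only $p\neq 2$ is), so I cannot invoke $p \nmid |\pi_1(G_\der)|$ here. The remedy is to work with a $z$-extension or, more simply, to observe that the generation statement $\bH^F = \bS^F\cdot\langle \bU_\a^F\rangle$ holds for \emph{any} connected reductive $\bH$ (it does not need simple-connectedness — that is only needed for $\bH^F$ to equal $\bH_{\sc}^F$, which we do not claim), and that $\langle \bU_\a^F\rangle = [\bH^F,\bH^F]$ follows from the commutator formula applied inside each minimal Levi, which again only needs $q$ large, not the isogeny type. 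So the proof goes through with the crude bound $c_\L \le 4$ regardless of $\pi_1$, and I would simply remark that a sharper $c_\L$ is available under (*). The remaining bookkeeping — identifying $(\BL_\L)_0$ with the reductive quotient attached to $\bx$ as in \S\ref{subsec:parahoric}, and checking $\bS_0$ is maximally split — is routine Bruhat–Tits theory and I would dispatch it in a sentence.
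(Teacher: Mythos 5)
Your proposal takes essentially the same route as the paper's proof: generate $(\BL_\L)_0^F$ by $\BS_0^F$ together with the $F$-fixed points of the unipotent parts, then place the unipotent parts inside commutators with torus elements under the condition that every root $\a \in \Phi((\BL_\L)_0, \BS_0)$ is nontrivial on $\BS_0^F$, a condition verified case-by-case on Dynkin diagrams to hold once $q \ge 4$ (the paper packages the commutator step by invoking the proof of \cite[Lemma 8.6]{Nie_24} applied to the two opposite maximal unipotent subgroups normalized by $\BS_0$). The only cosmetic differences are that you argue root group by root group and digress on the isogeny type and on Suzuki--Ree exceptions, which cannot occur here since $F$ is a standard Frobenius and $p>2$; otherwise the argument and the resulting bound $2 \le c_\L \le 4$ coincide with the paper's.
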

\begin{proof}
    Let $\BL_0 = (\BL_\L)_0$ By assumption, there exist an $F$-stable opposite maximal unipotent subgroups $\BV_0, \ov\BV_0 \subseteq \BL_0$ which are normalized by $\BS_0$. Then $\BL_0^F$ is generated by $\BS_0^F$, $\BV_0^F$ and $\ov\BV_0^F$. By the proof of \cite[Lemma 8.6]{Nie_24}, we have $\BV_0^F \subseteq [\BS_0^F, \BV_0^F]$ and $\ov\BV_0^F \subseteq [\BS_0^F, \ov\BV_0^F]$ if for each root $\a \in \Phi(\BL_0, \BS_0)$ there exists $z \in \BS_0^F$ such that $\a(z) \neq 1$. By a direct case-by-case analysis on the irreducible Dynkin diagrams, we deduce that this condition is always satisfied when $q \ge 4$. Hence the statement follows.
\end{proof}


\begin{theorem} \label{main-kappa}
    There exists a character $\psi$ of $\BK_\L^F$, which factors through $(\BL_\L)_0^F = \BK_\L^F / \BH_\L^F$, such that \[(-1)^{d_\L} \k_\L \cong \k(\L) \otimes \e_\L \otimes \psi\] as genuine representations of $\BK_\L^F$. If, moreover, $q \ge c_\L$, then $\psi$ is trivial. Here $c_\L$ is as in Proposition \ref{der}.
\end{theorem}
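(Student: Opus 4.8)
The plan is to compare the two genuine $\BK_\L^F$-modules $(-1)^{d_\L}\k_\L$ and $\k(\L)\otimes\e_\L$ by restricting them to the normal subgroup $\BH_\L^F$ and then invoking Proposition \ref{diff}. First I would record that $\BH_\L^F$ is indeed normal in $\BK_\L^F$ with quotient $(\BL_\L)_0^F$ (since $\BK_\L=\BL_\L\BH_\L$ and $\BL_\L$ normalizes $\BH_\L$, and the depth-$0+$ part of $\BL_\L$ is absorbed into $\BH_\L$). Next, restrict both modules to $\BH_\L^F$. On the $\k_\L$ side, Theorem \ref{geo-rep} with $\g=1$ gives $(-1)^{N}\k_\L\cong\o_\L$ as $\BK_\L^F$-modules, where $N\equiv r(S,\L)\equiv d_\L\bmod 2$; hence $(-1)^{d_\L}\k_\L$ is a genuine module whose restriction to $\BH_\L^F$ is (up to the sign, which is now $+1$) the irreducible Heisenberg representation $\o_\L$ inflated to $\BH_\L^F$. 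On the Yu side, $\k(\L)=\tilde\k(\L)|_{\BK_\L^F}$ restricts on $\BH_\L^F$ to $\o_\L$ by construction, and $\e_\L$ is a character of $(\widetilde\BL_\L)^F$, so $\e_\L|_{\BH_\L^F}$ is trivial on the part of $\BH_\L^F$ not coming from $\BL_\L$; more precisely $\e_\L$ factors through $(\BL_\L)_0^F$, so $(\k(\L)\otimes\e_\L)|_{\BH_\L^F}\cong\o_\L$ as well. Thus both modules restrict to the same irreducible $\BH_\L^F$-module $\o_\L$ and have equal dimension $|((\BH_\L/\BK_\L^+))^F|^{1/2}\cdot\dim\k(\L)/\dim\o_\L$; applying Proposition \ref{diff} with $K=\BK_\L^F$, $H=\BH_\L^F$ yields a character $\psi$ of $\BK_\L^F$ factoring through $(\BL_\L)_0^F$ with $(-1)^{d_\L}\k_\L\cong\k(\L)\otimes\e_\L\otimes\psi$.

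It remains to show $\psi=1$ when $q\ge c_\L$. By Proposition \ref{der}, for $q\ge c_\L$ the group $(\BL_\L)_0^F$ is generated by $\BS_0^F$ together with its commutator subgroup, where $\BS_0$ is a maximally split maximal torus. Since $\psi$ is a one-dimensional character, it is automatically trivial on the commutator subgroup; hence $\psi=1$ iff $\psi|_{\BS_0^F}=1$, i.e. iff $\psi(\g)=1$ for all $\g\in\BS_0^F$. Lifting $\g$ to an element of $\BS^F$ (abusing notation) and comparing traces, $\psi(\g)=1$ is equivalent to $(-1)^{d_\L}\tr(\g;\k_\L)=\tr(\g;\k(\L)\otimes\e_\L)$ whenever the common value is nonzero. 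Now Proposition \ref{geo-tr} gives
\[
\tr(\g;\k_\L)=(-1)^{r(S,\L,\g)}\,|((\BH_\L/\BK_\L^+)^\g)^F|^{\frac12}\prod_{i=0}^d\phi_i(\g),
\]
so $(-1)^{d_\L}\tr(\g;\k_\L)=(-1)^{r(S,\L)-r(S,\L,\g)}|((\BH_\L/\BK_\L^+)^\g)^F|^{\frac12}\prod_i\phi_i(\g)$, using $d_\L\equiv r(S,\L)$. On the other hand, $\tr(\g;\k(\L)\otimes\e_\L)=\e_\L(\g)\tr(\g;\k(\L))$, and Proposition \ref{WH-tr} computes $\tr(\g;\k(\L))$ — but that proposition is stated for unramified very regular $\g$, whereas a general $\g\in\BS_0^F$ need not be regular. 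So the genuine content of this step is to extend the trace comparison to arbitrary $\g\in\BS_0^F$: one decomposes $\g=\g_s\g_u$ into semisimple and unipotent parts, observes $\g_u\in(\BS^{0+})^F$ acts trivially by conjugation on the relevant Heisenberg quotient (exactly as in the proof of Proposition \ref{geo-tr}), reducing to $\g_s$; then one runs the Adler--Spice / DeBacker--Spice character formula \cite{AdlerS_09,DS18} for $\g_s$. Matching the two expressions, the sign $(-1)^{r(S,\L)-r(S,\L,\g)}$ on the geometric side is matched by the FKS sign $\e_\L(\g)$ times the Adler--Spice Gauss-sum sign on the algebraic side (this is precisely the identity $\dot\Xi_{\rm symm}(\phi_i,\g)\equiv(R_{i,S}\sm R_{i,S}(\g))/\langle F\rangle$ recorded in the proof of Proposition \ref{WH-tr}), and the square-root-of-index factors agree via the isomorphism $((\BH_\L/\BK_\L^+)^\g)^F\cong\prod_i(C^{(0+)}_{G^i}(\g),C^{(0+)}_{G^{i+1}}(\g))_{\bx,(r_i,r_i/2):(r_i,r_i/2+)}$. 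Both sides are visibly nonzero (the index factor is a power of $q$, hence positive, and $\prod_i\phi_i(\g)$ is a root of unity), so $\psi(\g)=1$.

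The main obstacle I anticipate is the last step: the trace formula of Proposition \ref{WH-tr} is only stated for unramified very regular $\g$, and extending it to an arbitrary element of $\BS_0^F$ requires carefully invoking the Adler--Spice and DeBacker--Spice character formulas in the non-regular semisimple case and checking that the sign bookkeeping (the FKS toral invariants $\e^{G^{i+1}/G^i}_{\natural,\bx}$ versus the orbit-counting quantities $R_{i,S}(\g)$) still matches. The Jordan-decomposition reduction to the semisimple part is straightforward and already appears verbatim in the proof of Proposition \ref{geo-tr}, but verifying that the algebraic-side sign genuinely equals $(-1)^{r(S,\L)-r(S,\L,\g)}\e_\L(\g)$ for non-regular $\g_s$ — i.e.\ that no extra contributions appear — is where the real work lies. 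Everything else (normality of $\BH_\L^F$, the restriction computations, the reduction to $\BS_0^F$ via Proposition \ref{der}, and triviality of $\psi$ on commutators) is formal.
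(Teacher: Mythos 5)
Your proposal is essentially the paper's own proof: Proposition \ref{diff}, applied to the two modules via their common irreducible restriction $\o_\L$ to $\BH_\L^F$ (using Theorem \ref{geo-rep} at $\g=1$ on the geometric side), produces the character $\psi$ of $(\BL_\L)_0^F$, and triviality for $q \ge c_\L$ is then obtained exactly as you describe, by Proposition \ref{der} together with the nonvanishing trace comparison of Propositions \ref{geo-tr} and \ref{WH-tr} on elements of $\BS^F$ for $\BS_0$ maximally split. The only difference is the obstacle you flag at the end: the paper does not separately address the restriction of Proposition \ref{WH-tr} to unramified very regular elements and simply invokes that trace formula for all $\g \in \BS^F$, so your cautionary remark identifies a point the paper treats implicitly rather than a divergence in method.
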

\begin{proof}
    By Proposition \ref{geo-rep}, $(-1)^{r(S, \L)} \k_\L |_{\BH_\L^F}$ is a genuine irreducible representation isomorphic to $\o_\L$. On the other hand, $(\k(\L) \otimes \e_\L) |_{\BH_\L^F} \cong \k(\L) |_{\BH_\L^F}$ is also isomorphic to $\o_\L$. Applying Proposition \ref{diff}, there exists a character $\psi$ of $\BK_\L^F / \BH_\L^F \cong (\BL_\L)_0^F$ such that \[\tag{a} \k(\L) \otimes \e_\L \cong (-1)^{d_\L} \k_\L \otimes \psi.\]

    Let $S \in \CS_{\L, \bx}$ such that $\BS_0$ is a maximally split maximal torus of $(\BL_\L)_0$. Let $\g \in \BS^F$. By Proposition \ref{geo-tr} and Proposition \ref{WH-tr}, we have $(-1)^{r(S, \L)} = (-1)^{d_\L}$ and \[\tr(\g; \k(\L) \otimes \e_\L) = \tr(\g; (-1)^{r(S, \L)} \k_\L) \neq 0.\] By (a), $\psi(\g) = 1$ and hence $\psi$ is trivial over $\BS^F$. By Proposition \ref{der}, $\psi$ is trivial and the statement follows.
\end{proof}

\section{Applications}
We discuss applications of Theorem \ref{main} on supercuspidal representations of $p$-adic group. We assume  the condition (*) throughout this section.

\subsection{Higher Deligne-Lusztig representation} \label{subsec:hDL}
let $T \subseteq G$ be an unramified elliptic maximal torus such that $\bx \in \CA(T, k)$. Let $\phi: T^F \to \ov\BQ_\ell^\times$ be a smooth character of depth $\le r$, where $r$ is a nonnegative integer.

Following \cite{CI_MPDL} the higher Deligne-Lusztig variety is defined by \[X_r = X_{T, U, r} =\{g \in \BG_r; g\i F(g) \in F \BU_r\},\] which admits a natural action of $\BG^F \times \BT^F$ given by $(g, t): x \mapsto g x t$. Then the group $\BG^F \times \BT^F$ acts on the $\ell$-adic cohomological groups with compact support $H_c^i(X_r, \ov\BQ_\ell)$ for $i \in \BZ$. Hence the $\phi |_{\BT^F}$-isotypic parts $H_c^i(X_r, \ov\BQ_\ell)[\phi |_{\BT^F}]$ are smooth representations of $\BG^F$.

The associated higher Deligne-Lusztig representation is defined by \[R_{T, U, r}^G(\phi) = \sum_i (-1)^i H_c^i(X, \ov\BQ_\ell)[\phi|_{\BT^F}],\] which is viewed as a virtual smooth representation of $Z_G^F\BG^F$. Thanks to \cite{Chan24}, $R_{T, U, r}^G(\phi)$  is independent of the choice of $r$ or $U$. Hence we denote it by $R_\BT^\BG(\phi)$.

\subsection{A decomposition result} \label{subsec:kappa}
Let $(\L, \phi^{-1})$ be a Howe factorization of $\phi$, namely \begin{itemize}
    \item $\phi_{-1}: G^F \to \ov\BQ_\ell^\times$ is a character of depth zero;

    \item $\L = (G^i, \phi_i, r_i)_{0 \le i \le d}$ is an unramified generic datum such that $T \subseteq G^0$;

    \item $\phi = \prod_{i= -1}^d \phi_i |_{T^F}$.
\end{itemize}
Thanks to \cite{Kaletha_19}, Howe factorizations always exist under the condition (*) on $p$.

Note that $\BT_0$ is an $\BF_q$-rational maximal torus of the reductive quotient $(\BL_\L)_0$ of $\BL_\L$, and $\phi_{-1}$ descends to character of $\BT_0^F$. We denote by $R_\BT^{\BL_\L}(\phi_{-1})$ the virtual $\BK_\L^F$-module inflated from the classical Deligne-Lusztig representation of $(\BL_\L)_0^F$ attached to the pair $(\BT_0^F, \phi_{-1})$ as in \cite{DeligneL_76}.

We also view the $\BK_\L^F$-modules $\k_\L$ and $R_\BT^{\BL_\L}(\phi_{-1})$ as $Z_G^F \BK_\L^F$-modules on which $Z_G^F$ acts by the characters $\prod_{i=0}^d \phi_i |_{Z_G^F}$ and $\phi_{-1}|_{Z_G^F}$ respectively.
\begin{theorem} [{\cite[Theorem 1.6]{Nie_24}}] \label{decomp}
    Let $(\L, \phi_{-1})$ be a Howe factorization of $\phi$. Then \[R_\BT^\BG(\phi) = \ind_{ \BK_\L^F}^{\BG^F} \k_\L \otimes R_\BT^{\BL_\L}(\phi_{-1}) = \sum_\rho m_\rho \ind_{ \BK_\L^F}^{\BG^F} \k_\L \otimes \rho,\] where $\rho$ ranges over irreducible $(\BL_\L)_0^F$-modules up to equivalence, and $m_\rho$ is the coefficient of $\rho$ in $R_\BT^{\BL_\L}(\phi_{-1})$.

    Moreover, the representations $\ind_{\BK_\L^F}^{\BG^F} \k_\L \otimes \rho$ are pairwise non-isomorphic irreducible $\BG^F$-modules.
\end{theorem}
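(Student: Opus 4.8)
The plan is to compute the $\phi|_{\BT^F}$-isotypic cohomology of the higher Deligne--Lusztig variety $X_r$ by peeling off, one layer at a time, the chain $G^0 \subsetneq G^1 \subsetneq \cdots \subsetneq G^d = G$ of the generic datum $\L$, and then matching the outcome with the Yu-type variety $Y_\L$ (which produces $\k_\L$) and the classical Deligne--Lusztig variety of the reductive quotient $(\BL_\L)_0 \cong \BK_\L/\BH_\L$ (which produces $R_\BT^{\BL_\L}(\phi_{-1})$). Using that $R_\BT^\BG(\phi)$ does not depend on $r$ or $U$ (\cite{Chan24}), I would take $r = r_d$ and $U$ compatible with $B$, and then use the Moy--Prasad filtration to stratify $\BG_r$ along the Yu subgroups $\BK_\L \supseteq \BH_\L \supseteq \BK_\L^+ \supseteq \BE_\L$. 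Two structural inputs are needed: (i) the complement of $\BK_\L$ inside $\BG_r$ is filtered by vector groups (root-filtration quotients for roots outside $\Phi(G^0,S)$, together with $\ov\BU$-directions), so a Lang-torsor and affine-bundle argument identifies $H_c^*(X_r,\ov\BQ_\ell)[\phi|_{\BT^F}]$, up to an even degree shift, with the $\BG^F$-induction from $\BK_\L^F$ of the $\phi|_{\BT^F}$-isotypic cohomology of a variety $Z$ carrying a $\BK_\L^F \times \BT^F$-action; and (ii) $Z$ fibers $\BK_\L^F$-equivariantly, up to cohomologically trivial affine bundles, over $Y_\L \times \CX_0$, where $\CX_0$ is the classical Deligne--Lusztig variety for $(\BL_\L)_0$ attached to $(\BT_0,\phi_{-1})$.

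To identify the resulting $\BK_\L^F$-module I would split the character along the Howe factorization $\phi|_{\BT^F} = \prod_{i=-1}^d \phi_i|_{\BT^F}$: the factor $\prod_{i=0}^d \phi_i$ matches the data $\chi_\L$ and $\th_\L$ cutting $\k_\L$ out of $H_c^*(Y_\L,\ov\BQ_\ell)$ -- here one invokes $\BK_\L^+ = \BS^{0+}\BE_\L$ and $(\BK_\L^+)^F = (\BS^{0+})^F\BE_\L^F$ from \S\ref{sec:WH-rep} -- while $\phi_{-1}$ descends to the depth-zero character of $\BT_0^F$ defining $R_\BT^{\BL_\L}(\phi_{-1})$. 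A Künneth computation then yields $H_c^*(Z,\ov\BQ_\ell)[\phi|_{\BT^F}] \cong \k_\L \otimes R_\BT^{\BL_\L}(\phi_{-1})$ as virtual $\BK_\L^F$-modules, with the even shifts from (i) disappearing in the alternating sums; additivity of induction and the expansion $R_\BT^{\BL_\L}(\phi_{-1}) = \sum_\rho m_\rho \rho$ then give the two displayed equalities.

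For the last assertion, set $\k_\L' = (-1)^{d_\L}\k_\L$, a genuine $\BK_\L^F$-module whose restriction to $\BH_\L^F$ is the irreducible $\o_\L$ by Theorem \ref{geo-rep}. Lemma \ref{irr-dec}, applied to the normal subgroup $\BH_\L^F \subseteq \BK_\L^F$, shows that the $\k_\L' \otimes \rho$ are pairwise non-isomorphic irreducible $\BK_\L^F$-modules as $\rho$ runs over the irreducible $(\BL_\L)_0^F$-modules. It then remains to verify the Mackey criterion: one expands $\Hom_{\BG^F}\big(\ind_{\BK_\L^F}^{\BG^F}(\k_\L'\otimes\rho),\, \ind_{\BK_\L^F}^{\BG^F}(\k_\L'\otimes\rho')\big)$ as a sum of $\Hom$-spaces over $\BK_\L^F$-double cosets in $\BG^F$, and shows that every coset other than the trivial one contributes $0$. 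This is exactly where the genericity of $\L$ -- the $(G^i,G^{i+1})$-genericity of the $\phi_i$ in the sense of \cite[\S 9]{Yu_01} -- is used: for $g \notin \BK_\L^F$ it forces the central characters of $\k_\L'$ and of its $g$-conjugate to disagree on $\BK_\L^+ \cap g\BK_\L^+ g^{-1}$ unless $g$ normalizes the relevant structure, which reduces the double-coset sum to a computation inside $(\BL_\L)_0$ settled by the classical Deligne--Lusztig theory for the constituents of $R_\BT^{\BL_\L}(\phi_{-1})$. Multiplying through by $(-1)^{d_\L}$ gives the statement for $\ind_{\BK_\L^F}^{\BG^F}\k_\L\otimes\rho$.

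I expect the geometric reduction (i)--(ii) to be the main obstacle: one must make rigorous the claim that $X_r$ is, up to cohomologically trivial affine bundles, an induced bundle over $Y_\L \times \CX_0$, tracking carefully which filtration steps are $F$-split (and hence give honest induction together with affine bundles) versus which genuinely require the Deligne--Lusztig mechanism at the level of $(\BL_\L)_0$. The Clifford-theoretic bookkeeping in the last step is routine once Yu's genericity input has been isolated, but it does depend on an explicit description of the intersections $\BK_\L^F \cap g\BK_\L^F g^{-1}$.
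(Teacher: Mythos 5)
This statement is not proved in the paper at all: it is imported verbatim as \cite[Theorem 1.6]{Nie_24}, so there is no internal proof to compare against, and your proposal has to be judged as a standalone argument for a result whose proof occupies the bulk of that reference. As such, what you have written is a plausible outline of the known strategy, but the entire mathematical content is concentrated in the two steps you yourself flag as "the main obstacle" and do not carry out. Claim (i), that the complement of $\BK_\L$ in $\BG_r$ can be peeled off by Lang torsors and cohomologically trivial affine fibrations, and claim (ii), that the resulting variety $Z$ fibers $\BK_\L^F$-equivariantly over $Y_\L \times \CX_0$ up to affine bundles so that a K\"unneth argument yields $\k_\L \otimes R_\BT^{\BL_\L}(\phi_{-1})$, are precisely where the difficulty lies: the filtration steps are not all $F$-split, the $\ov\BU$-directions do not simply contribute affine-space factors, and the passage from the $\phi|_{\BT^F}$-isotypic cohomology of $X_r$ to an honest induction from $\BK_\L^F$ is exactly the concentration-type result that the present paper alludes to in its strategy section (and which also forces the hypotheses: ellipticity of $T$ and the condition (*) on $p$, neither of which plays any visible role in your sketch). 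Asserting (i)--(ii) and then "a K\"unneth computation then yields" is not a proof; it is a restatement of the theorem in geometric language.

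The irreducibility part has the same character. Lemma \ref{irr-dec} does give that the $\k_\L \otimes \rho$ are pairwise non-isomorphic irreducible $\BK_\L^F$-modules, but the passage to $\ind_{\BK_\L^F}^{\BG^F}$ requires showing that all nontrivial $\BK_\L^F$-double cosets in $\BG^F$ contribute no intertwining. You correctly identify $(G^i,G^{i+1})$-genericity of the $\phi_i$ as the expected input, but the actual verification --- describing $\BK_\L^F \cap g\BK_\L^F g^{-1}$, comparing the restrictions of $\chi_\L$ and its $g$-twist on the relevant Moy--Prasad pieces, and reducing the residual cosets to a depth-zero computation in $(\BL_\L)_0^F$ --- is a genuine argument (the analogue, at parahoric level, of the delicate intertwining analysis in Yu's construction), and it is here, not only in the geometry, that ellipticity of $T$ and the structure of the constituents of $R_\BT^{\BL_\L}(\phi_{-1})$ must enter; your sketch leaves this entirely to "routine bookkeeping." So the proposal is a reasonable roadmap consistent with the approach of \cite{Nie_24}, but it does not constitute a proof: both pillars on which it rests are stated, not established.
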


\subsection{Yu's construction}
Following \cite{Yu_01}, we say a triple $\Sigma = (\L, \bx, \widetilde\rho)$ is a cuspidal datum, namely \begin{itemize}
    \item $\L = (G^i, \phi_i, r_i)_{0 \le i \le d}$ is a generic datum with $Z_G / Z_{G^0}$ anisotropic;

    \item $\bx \in \CB(G^0, k)$ whose image $[\bx]$ in $\CB((G^0)_\der, k)$ is a vertex;

    \item $\widetilde\rho$ is a depth $0$ irreducible representation of $\widetilde\BL_\L^F$ such that $\widetilde\rho |_{(\BL_\L)^F}$ descends to a cuspidal representation of $(\BL_\L)_0^F$. Here $\widetilde \BL_\L$ is the stabilizer of $[x]$ in $L_\L = G^0$.
\end{itemize}
Moreover, we say $\Sigma$ is an unramified cuspidal datum if $\L$ is an unramified generic datum.

\begin{theorem} [{\cite[Theorem 0.1]{Yu_01}}]
    Let $\Sigma = (\L, \bx, \widetilde\rho)$ be a cuspidal datum. Set $\widetilde \BK_\L = \widetilde \BL_\L \BK_\L$. Then the compactly induced representation \[\pi_\Sigma^{\rm Yu} := \text{c-}\ind_{\widetilde \BK_\L^F}^{G^F} \widetilde\k(\L) \otimes \widetilde\rho\] is an irreducible supercuspidal representation of $G^F$.
\end{theorem}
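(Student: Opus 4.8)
This is \cite[Theorem 0.1]{Yu_01}. The plan is to invoke the standard criterion for compact induction from an open subgroup that is compact modulo the centre: if $J \subseteq G^F$ is such a subgroup and $\sigma$ is an irreducible smooth representation of $J$ whose intertwining set $I(\sigma) = \{g \in G^F : \Hom_{J \cap {}^gJ}(\sigma, {}^g\sigma) \neq 0\}$ equals $J$, then $\text{c-}\ind_J^{G^F}\sigma$ is irreducible; moreover, since the matrix coefficients of this induced module are supported on the $J$-double cosets meeting $\supp(\sigma)$ and $J/Z_G^F$ is compact, the coefficients are compactly supported modulo $Z_G^F$, so the module is automatically supercuspidal. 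I would apply this with $J = \widetilde\BK_\L^F$ and $\sigma = \widetilde\k(\L) \otimes \widetilde\rho$.

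First I would dispose of the group-theoretic input. The group $\widetilde\BK_\L^F$ is open in $G^F$, and it is compact modulo $Z_G^F$: each layer $(\BG^i)^{r_{i-1}/2}$ is compact, $\widetilde\BL_\L^F$ is compact modulo $Z_{G^0}^F$, and the hypothesis that $Z_G/Z_{G^0}$ is anisotropic forces $Z_{G^0}^F/Z_G^F$ to be compact. Irreducibility of $\sigma$ I would extract from the construction recalled in \S\ref{subsec:FKS}: $\widetilde\k(\L)|_{\BH_\L^F} = \o_\L$ is the Heisenberg representation, which is irreducible by Proposition \ref{H-group}; the larger group $\widetilde\BK_\L^F$ acts compatibly via the associated Weil representation together with $\prod_i\phi_i$ on $\widetilde\BL_\L^F$, while $\widetilde\rho$ restricts irreducibly to $\widetilde\BL_\L^F$ and lies over a cuspidal representation of $(\BL_\L)_0^F$. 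Decomposing $\sigma|_{\BH_\L^F}$ into $\o_\L$-isotypic pieces and tracking the action of $\widetilde\BL_\L^F$ on the multiplicity space then gives irreducibility.

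The core of the argument is the intertwining identity $I(\sigma) = \widetilde\BK_\L^F$; the inclusion $\supseteq$ is trivial, so the work is the bound $I(\sigma) \subseteq \widetilde\BK_\L^F$. I would proceed by induction on $d$, peeling off the top jump $(G^{d-1} \subseteq G^d = G,\ \phi_{d-1},\ r_{d-1})$ and relating $\widetilde\k(\L)$ to the analogous representation for the truncated datum. The relevant subquotient of $\BH_\L$ is a Heisenberg group whose symplectic form is built from the commutator pairing on the root groups of $G^d$ relative to $G^{d-1}$, twisted by $\phi_{d-1}$; the $(G^{d-1}, G^d)$-genericity of $\phi_{d-1}$ in the sense of \cite[\S 9]{Yu_01} is precisely the hypothesis ensuring that this pairing is nondegenerate and, more importantly, that any $g$ intertwining the attached extended Weil representation must normalise the relevant Moy--Prasad subgroups, hence lies in a product of the normaliser of the $G^{d-1}$-data with the current layer. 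Descending through every jump reduces the claim to the statement that $\widetilde\rho$, lying over a cuspidal representation of $(\BL_\L)_0^F$, has intertwining in $L_\L^F$ contained in $\widetilde\BL_\L^F$ --- which follows from cuspidality by parabolic descent.

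I expect the main obstacle to be exactly this single-jump intertwining computation, that is, Yu's analysis of which elements of $G^F$ intertwine a twisted Heisenberg--Weil representation, where the genericity conditions on the $\phi_i$ do all the work. One must also handle the compatibility of the Weil-representation extensions across consecutive jumps with care --- this is the point repaired by Fintzen --- so in practice I would follow the construction and arguments of \cite[\S 2.5]{Fintzen_21} rather than \cite{Yu_01} verbatim.
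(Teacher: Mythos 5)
This statement is quoted from Yu (\cite[Theorem 0.1]{Yu_01}) and is not proved in the paper itself, so the only ``paper proof'' is the citation; your outline --- compact induction from the open, compact-mod-centre subgroup $\widetilde\BK_\L^F$, irreducibility of $\widetilde\k(\L)\otimes\widetilde\rho$ via the Heisenberg--Weil structure together with cuspidality of the depth-zero part, the intertwining bound $I(\sigma)=\widetilde\BK_\L^F$ obtained jump by jump from the genericity of the $\phi_i$, and Fintzen's repair of the Weil-representation compatibility --- is precisely the argument of the cited sources \cite{Yu_01} and \cite{Fintzen_21}. Hence your proposal is correct in outline and follows essentially the same route as the proof the paper relies on (note only that the anisotropy hypothesis should be read as $Z_{G^0}/Z_G$ anisotropic, which is what yields compactness of $\widetilde\BK_\L^F$ modulo $Z_G^F$).
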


Let $\Sigma = (\L, \bx, \widetilde\rho)$ be an unramified cupidal datum. By \cite[Corollary 7.7 \& Proposition 8.2]{DeligneL_76}, there exist an elliptic torus $T \in \CS_{\L, \bx}$ and a depth 0 character $\phi_{-1}$ of $\BT^F$ such that $\widetilde\rho |_{\BL_\L^F}$ appears in $R_{\BT}^{\BL_\L}(\phi_{-1})$. Moreover, as $T$ is unramified, we have $T^F = Z_G^F \BT^F$. Hence we can extends $\phi_{-1}$ to a character of $T^F$, still denoted by $\phi_{-1}$, such that the central character of $\rho$ equals $\phi_{-1} |_{Z_G^F}$. Let $\phi = \chi_\L|_{\BT^F} \phi_{-1}$. By definition $(\L, \phi_{-1})$ is a Howe factorization of $\phi$.

We view the $\BG^F$-module $R_\BT^\BG(\phi)$ as a $Z_G^F \BG^F$-module on which $Z_G^F$ acts via the character $\phi|_{Z_G^F}$.
\begin{theorem}
    Assume $p$ satisfies (*) and $q \ge c_\L$. Let $\Sigma$ and $\phi$ be as above. Then  \begin{itemize}
        \item $\pi_\Sigma^{\rm FKS}$ is a summand of $\text{c-}\ind_{Z_G^F \BG^F}^{G^F} R_\BT^\BG(\phi)$;

        \item $\pi_\Sigma^{\rm FKS} \cong (-1)^{r(L_\L) -r(T) + d_\L} \text{c-}\ind_{Z_G^F \BG^F}^{G^F} R_\BT^\BG(\phi)$ if $\phi_{-1}$ is regular in the sense of \cite[Definition 3.4.16]{Kaletha_19}.
    \end{itemize}
    Here $r(L_\L)$ and $r(T)$ denotes the $k$-rank of $L_\L$ and $T$ respectively.
\end{theorem}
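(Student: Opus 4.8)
The plan is to feed the comparison Theorem~\ref{main-kappa} into the decomposition Theorem~\ref{decomp} and then to recognize the resulting compact induction as Yu's $\pi_\Sigma^{\rm FKS}$. Since $(\L,\phi_{-1})$ is a Howe factorization of $\phi$ by construction, Theorem~\ref{decomp} gives $R_\BT^\BG(\phi)=\ind_{\BK_\L^F}^{\BG^F}\k_\L\otimes R_\BT^{\BL_\L}(\phi_{-1})$, and, since $q\ge c_\L$, Theorem~\ref{main-kappa} lets me replace $\k_\L$ by $(-1)^{d_\L}\k(\L)\otimes\e_\L$. Applying $\text{c-}\ind_{Z_G^F\BG^F}^{G^F}$, factoring it through the chain $Z_G^F\BK_\L^F\subseteq\widetilde\BK_\L^F$ (legitimate since $Z_G\subseteq G^0$), matching central characters along the way, and using the projection formula together with $\k(\L)=\widetilde\k(\L)|_{\BK_\L^F}$ and $\e_\L=\widetilde\e_\L|_{\BK_\L^F}$, I arrive at
\[
\text{c-}\ind_{Z_G^F\BG^F}^{G^F}R_\BT^\BG(\phi)=(-1)^{d_\L}\,\text{c-}\ind_{\widetilde\BK_\L^F}^{G^F}\Bigl(\widetilde\k(\L)\otimes\widetilde\e_\L\otimes\ind_{Z_G^F\BK_\L^F}^{\widetilde\BK_\L^F}R_\BT^{\BL_\L}(\phi_{-1})\Bigr).
\]

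\emph{First bullet.} The module $R_\BT^{\BL_\L}(\phi_{-1})$ is inflated along $\BK_\L^F\twoheadrightarrow(\BL_\L)_0^F$ — whose kernel $\BH_\L^F$ is normal in $\widetilde\BK_\L^F$ — from the classical Deligne--Lusztig module $R_{\BT_0}^{(\BL_\L)_0}(\phi_{-1})$, with $Z_G^F$ acting by $\phi_{-1}|_{Z_G^F}$; hence induction commutes with inflation, and Frobenius reciprocity gives $\langle\widetilde\rho,\,\ind_{Z_G^F\BK_\L^F}^{\widetilde\BK_\L^F}R_\BT^{\BL_\L}(\phi_{-1})\rangle=\langle\rho,\,R_{\BT_0}^{(\BL_\L)_0}(\phi_{-1})\rangle$, where $\rho$ is the cuspidal module of $(\BL_\L)_0^F$ to which $\widetilde\rho|_{\BL_\L^F}$ descends. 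This is nonzero by the choice of $(T,\phi_{-1})$, and such a pair exists by \cite[Corollary~7.7 and Proposition~8.2]{DeligneL_76}: one lifts a torus and character realizing $\rho$ in a Deligne--Lusztig module on $(\BL_\L)_0$ through the parahoric to an unramified elliptic $T\in\CS_{\L,\bx}$ and a depth-zero $\phi_{-1}$ on $\BT^F$, extended to $T^F=Z_G^F\BT^F$ with the same central character as $\widetilde\rho$. Since $\text{c-}\ind_{\widetilde\BK_\L^F}^{G^F}(\widetilde\k(\L)\otimes\widetilde\e_\L\otimes(-))$ sends distinct irreducible cuspidal-type constituents to distinct irreducible supercuspidals \cite{Yu_01,Fintzen_21}, $\pi_\Sigma^{\rm FKS}$ occurs with nonzero multiplicity in $\text{c-}\ind_{Z_G^F\BG^F}^{G^F}R_\BT^\BG(\phi)$; passing to a single cohomological degree by the argument of \cite{Nie_24} (cf.\ Theorem~\ref{main-sup}), which is insensitive to the size of $q$, then exhibits $\pi_\Sigma^{\rm FKS}$ as a genuine summand of $\text{c-}\ind_{Z_G^F\BG^F}^{G^F}H_c^i(X,\ov\BQ_\ell)[\phi]$ for a suitable $i$.

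\emph{Second bullet and main obstacle.} If $\phi_{-1}$ is regular, Deligne--Lusztig theory gives $R_{\BT_0}^{(\BL_\L)_0}(\phi_{-1})=(-1)^{r((\BL_\L)_0)-r(\BT_0)}\rho$ with $\rho$ a single irreducible cuspidal module \cite{DeligneL_76}, and $r((\BL_\L)_0)-r(\BT_0)=r(L_\L)-r(T)$ since $\BT_0$, $(\BL_\L)_0$ are the reductive quotients of the unramified groups $T$, $L_\L$ and so have $\BF_q$-rank equal to the $k$-rank of the latter. Granting $\ind_{Z_G^F\BK_\L^F}^{\widetilde\BK_\L^F}(\mathrm{infl}\,\rho)\cong\widetilde\rho$, the displayed identity collapses to $\text{c-}\ind_{Z_G^F\BG^F}^{G^F}R_\BT^\BG(\phi)=(-1)^{d_\L+r(L_\L)-r(T)}\pi_\Sigma^{\rm FKS}$, which is the second bullet. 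I expect this last identity to be the main obstacle: it is precisely the assertion that Kaletha's $\widetilde\rho$ is the extension of $\mathrm{infl}\,\rho$ through which the induction factors, i.e.\ that the finite quotient $\widetilde\BK_\L^F/Z_G^F\BK_\L^F$ introduces no further Clifford-theoretic twists, so that compact induction from $Z_G^F\BG^F$ and from $\widetilde\BK_\L^F$ yield a single irreducible rather than a sum over a packet of twists of $\widetilde\rho$. I would extract this from the structure of regular supercuspidal data (\cite[\S 3.4]{Kaletha_19}; cf.\ \cite{ChanOi_25a}). Apart from this normalization point, everything is formal bookkeeping of compact induction, inflation, Frobenius reciprocity, and Yu's irreducibility theorem, now made explicit by Theorem~\ref{main-kappa}.
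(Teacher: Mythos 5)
Your proposal is correct and follows essentially the same route as the paper's proof: Theorem \ref{decomp} combined with Theorem \ref{main-kappa}, the projection formula along $Z_G^F\BK_\L^F \subseteq \widetilde\BK_\L^F$, and Frobenius reciprocity for the first bullet. The identity you flag as the main obstacle in the second bullet is precisely \cite[Lemma 3.4.20]{Kaletha_19}, which the paper invokes directly in the form $\widetilde\rho \cong (-1)^{r(L_\L)-r(T)}\ind_{Z_G^F\BL_\L^F}^{\widetilde\BL_\L^F} R_\BT^{\BL_\L}(\phi_{-1})$, so the step you left as ``granted'' is exactly the citation the paper uses and requires no further argument.
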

\begin{proof}
    Let $A$ be finite group and let $\chi = \sum_\pi c_\pi \pi$ be a virtual $A$-module, where $c_\pi \in \BZ$ and $\pi$ ranges over irreducible $A$-modules up to isomorphism. We set $|\chi| = \sum_\pi |c_\pi| \pi$.

    By reciprocity and the choice of $\phi_{-1}$ we have \[\hom_{\widetilde \BL_\L^F}(\widetilde \rho, \ind_{Z_G^F \BL_\L^F}^{\widetilde \BL_\L^F} |R_\BT^{\BL_\L}(\phi_{-1})|) = \hom_{Z_G^F \BL_\L^F} (\rho, |R_\BT^{\BL_\L}(\phi_{-1})|) \neq 0.\] Now we have \begin{align*}
        \pi_\Sigma^{\rm FKS} &= \text{c-}\ind_{\widetilde \BK_\L^F}^{G^F} \widetilde\k(\L) \otimes \widetilde \e_\L \otimes \widetilde \rho \\ &\subseteq \text{c-}\ind_{Z_G^F \BK_\L^F}^{G^F} \k(\L) \otimes \e_\L \otimes \otimes |R_\BT^{\BL_\L}(\phi_{-1})| \\ &=\text{c-}\ind_{Z_G^F \BK_\L^F}^{G^F} |\k_\L| \otimes |R_\BT^{\BL_\L}(\phi_{-1})| \\ &= \text{c-}\ind_{Z_G^F \BG^F}^{G^F} \ind_{Z_G^F \BK^F}^{Z_G^F \BG^F} |\k_\L| \otimes |R_\BT^{\BL_\L}(\phi_{-1})| \\
        &= \text{c-}\ind_{Z_G^F \BG^F}^{G^F} |R_\BT^\BG(\phi)|,
    \end{align*} where $\e_\L, \k(\L)$ are viewed as restrictions of $\widetilde \e_\L, \widetilde \k(\L)$ to $Z_G^F \BK_\L^F$ respectively, and the last equality follows from Theorem \ref{decomp}. Hence the first statement follows.

    Now assume that $\phi_{-1}$ is regular in the sense of \cite[Definition 3.4.16]{Kaletha_19}. As $Z_G^F \BL_\L^F = T^F \BL_\L^F$, it follows by \cite[Lemma 3.4.20]{Kaletha_19} that \[\widetilde \rho \cong (-1)^{r(L_\L) - r(T)}\ind_{Z_G^F \BL_\L^F}^{\widetilde \BL_\L^F} R_\BT^{\BL_\L}(\phi_{-1}).\] Note that $\widetilde \BL_\L^F / Z_G^F \BL_\L^F \cong \widetilde \BK_\L^F / Z_G^F \BK_\L^F$. Applying projection formula we have \begin{align*}\widetilde \k(\L) \otimes \widetilde \e_\L \otimes \widetilde \rho &\cong (-1)^{r(L_\L) - r(T)}\ind_{Z_G^F \BK_\L^F}^{\widetilde \BK_\L^F} \k(\L) \otimes \e_\L \otimes R_\BT^{\BL_\L}(\phi_{-1}) \\ &\cong (-1)^{r(L_\L) - r(T) + d_\L} \ind_{Z_G^F \BK_\L^F}^{\widetilde \BK_\L^F} \k_\L \otimes R_\BT^{\BL_\L}(\phi_{-1}). \end{align*} Then the second statement follows similarly as in the proof of the first statement.
\end{proof}

\end{document}